\newtheorem{thm}{Theorem}
\newtheorem{ex}[thm]{Example}
\newtheorem{lemma}[thm]{Lemma}
\newtheorem{prop}[thm]{Proposition}
\newtheorem{cor}[thm]{Corollary}
\newtheorem{defn}[thm]{Definition}
\newcommand{\ignore}[1]{}
\newcommand{\N}{\mathbb{N}}
\newcommand{\R}{\mathbb{R}}
\newcommand{\Z}{\mathbb{Z}}
\newcommand{\C}{\mathbb{C}}
\newcommand{\B}{\mathbb{B}}
\newcommand{\Int}{\text{Int }}
\newcommand{\graph}{\text{graph}}
\renewcommand{\Re}{\text{Re}}
\renewcommand{\Im}{\text{Im}}
\DeclareMathOperator{\sech}{sech}
\title{Rate-induced Tipping in Discrete-time Dynamical Systems}
\author{Claire Kiers}
\begin{document}

\maketitle

\begin{abstract}
We develop a definition of rate-induced tipping (R-tipping) in discrete-time dynamical systems (maps) and prove results giving conditions under which R-tipping will or will not happen. Specifically, we study (possibly non-invertible) maps with a time-varying parameter subject to a parameter shift. We show that each stable path has a unique associated solution (a local pullback attractor) which stays near the path for all negative time. When the parameter changes slowly, this local pullback attractor stays near the path for all time, but if the parameter changes quickly, the local pullback attractor may move away from the path in positive time; this is the phenomenon of R-tipping. We demonstrate that forward basin stability is an insufficient condition to prevent R-tipping in maps of any dimension but that forward inflowing stability is sufficient. Furthermore, we show that R-tipping will happen when there is a certain kind of forward basin instability, and we prove precisely what happens to the local pullback attractor as the rate of the parameter change approaches infinity. We then highlight the differences between discrete- and continuous-time systems by showing that when a map is obtained by discretizing a flow, the pullback attractors for the map and flow can have dramatically different behavior; there may be R-tipping in one system but not in the other. We finish by applying our results to demonstrate R-tipping in the 2-dimensional Ikeda map.
\end{abstract}

\section{Introduction}

In this paper, we consider what it means for there to be rate-induced tipping (R-tipping) in a discrete-time dynamical system (map or difference equation) with time-varying parameters. Rate-induced tipping has already been studied extensively in continuous-time dynamical systems, or flows (\cite{sebas}, \cite{pete}, \cite{kiers}), but one cannot simply deduce results about R-tipping for maps from flows. To begin with, not all maps come from flows; flows are invertible while many maps are non-invertible. Furthermore, even if a map is obtained from a flow (say, by a taking a Poincar\'e section or by evaluating the flow at evenly-spaced discrete time steps), the parameter change affects the flow and the map differently. In the flow the parameter changes constantly, while in the map the parameter is fixed for each evaluation of the map. As a result, solutions to the corresponding map and flow can look very different. Since some physical processes are better modelled with maps than with flows (such as the Ikeda map of \cref{section:ikeda}), we endeavor here to establish the basic theory of R-tipping in maps.

Rate-induced tipping is, roughly, a drastic change in the behavior of a system due to quickly-changing parameters. Imagine trying to pull a tablecloth out from under a set of dishes on a table. If the tablecloth is pulled slowly, it will carry all the dishes with it, but (theoretically) if the tablecloth is pulled quickly enough the dishes will be left behind on the table. Here we get two distinct outcomes: dishes come with the tablecloth vs. dishes get left behind. The deciding factor between these outcomes is not how {\em far} the tablecloth moves, but how {\em fast}. Likewise, with R-tipping, the end behavior of solutions is determined not by how {\em much} the parameters change, but how {\em quickly}.

R-tipping was introduced in \cite{compost} and compared against other kinds of tipping (bifurcation and noise) in \cite{pete}. Bifurcation-induced tipping happens due to a bifurcation in the system (say, the annihilation of a stable fixed point) and is the result of parameters changing too {\em much}. With rate-induced tipping, there is no such bifurcation in the system to explain the sudden change in behavior. Noise-induced tipping happens as a result of noise in the system, which rate-induced tipping does not require, although there can be interplay between the two phenomena, as studied in \cite{noiseAndRate}.

Most of the literature discusses rate-induced tipping in the context of flows. In \cite{sebas}, R-tipping is studied in flows where parameters change according to a {\em parameter shift} (essentially, a smooth transition from one constant value to another). Conditions are given there and further developed in \cite{kiers} for when one might expect to see R-tipping in such a system. The goal of this paper is to explore the possibility of R-tipping in maps and to propose conditions for tipping in this kind of system as an analog to \cite{sebas} and \cite{kiers}. Some initial work was done on this in Chapter 5 of \cite{hahn}, and we hope to expand on their results.

Suppose we have a map of the form
\begin{equation}\label{map}
x_{n+1} = f(x_n,\lambda),
\end{equation}
where $x_i \in \R^\ell$, $\lambda \in \R^m$, and $f: \R^\ell \times \R^m \to \R^\ell$ is $C^1$. Note that $f( \cdot , \lambda)$ need not be invertible. We want to allow the parameter $\lambda$ to vary in a continuous way from one value to another over time, so we replace it with a $C^1$ function $\Lambda: \R \to \R^m$ satisfying
\begin{equation}
\begin{split}\label{paramShift}
\lim_{s \to \pm \infty} \Lambda(s) & = \lambda_\pm \\
\lim_{s \to \pm \infty} \Lambda'(s) & = 0
\end{split}
\end{equation}
for some $\lambda_\pm \in \R^n$. (In \cite{sebas}, such a function is called a {\em parameter shift}.) To allow the parameter change to happen at different rates, we introduce the rate $r > 0$ and obtain the map
\begin{equation}\label{mapWithRate}
x_{n+1} = f(x_n,\Lambda(rn)).
\end{equation}
When $r$ is close to $0$, $\Lambda(rn)$ changes slowly as $n$ increases, but when $r$ is large, $\Lambda(rn)$ approximates a step function from $\lambda_-$ to $\lambda_+$. Solutions to \cref{mapWithRate} also satisfy the map
\begin{equation}\label{autonomousMap}
\begin{split}
s_{n+1} &= s_n + r \\
x_{n+1} &= f(x_n,\Lambda(s_n))
\end{split}
\end{equation}
where $s_n = rn$. However, \cref{mapWithRate} and \cref{autonomousMap} are not equivalent because a solution $\{x_n, s_n\}$ to \cref{autonomousMap} does not have to satisfy $s_0 = 0$. We will refer to \cref{map} as the {\em autonomous map} and \eqref{mapWithRate} as the {\em nonautonomous map}. Notice that if $r = 0$, then the nonautonomous map reduces to the autonomous map where $\lambda = \Lambda(0)$. \par

For a square matrix $M$, let $\rho(M) = \max \{|\lambda|: \lambda \text{ is an eigenvalue of }M\}$ denote the spectral radius. Then we define a {\em path} as follows:
\begin{defn}
Suppose that for all $s \in \R$, $X(s)$ is an equilibrium of \eqref{map} with $\lambda = \Lambda(s)$ such that $(s,X(s))$ is a connected curve. Suppose also that this extends to the limits as $s \to \pm \infty$, so $X_\pm = \lim_{s \to \pm \infty} X(s)$ are equilibria of \eqref{map} with $\lambda = \lambda_\pm$, respectively. Then $(s,X(s))$ is
\begin{enumerate}
\item a {\em stable path} if $\rho(D_x f(X(s),\Lambda(s))) < 1$ for all $s \in \R \cup \{\pm \infty\}$;
\item an {\em unstable path} if $\rho(D_x f(X(s),\Lambda(s))) > 1$ for all $s \in \R \cup \{\pm \infty\}$.
\end{enumerate}
\end{defn}

Paths are not solutions to the nonautonomous map \cref{mapWithRate}; rather, they are a guideline against which we can compare solutions of the map. Note that paths are continuous curves, while solutions $\{x_n\}_{n \in \Z}$ of \cref{mapWithRate} are sequences of discrete points. In this paper, it will be helpful to plot paths $\{(s,X(s))\}_{s \in \R}$ and solutions $\{(rn,x_n)\}_{n \in \Z}$ together on the same axes; see \cref{pathsAndSolutions} as an illustration.

\begin{figure}
\centering
\includegraphics[scale = 0.55]{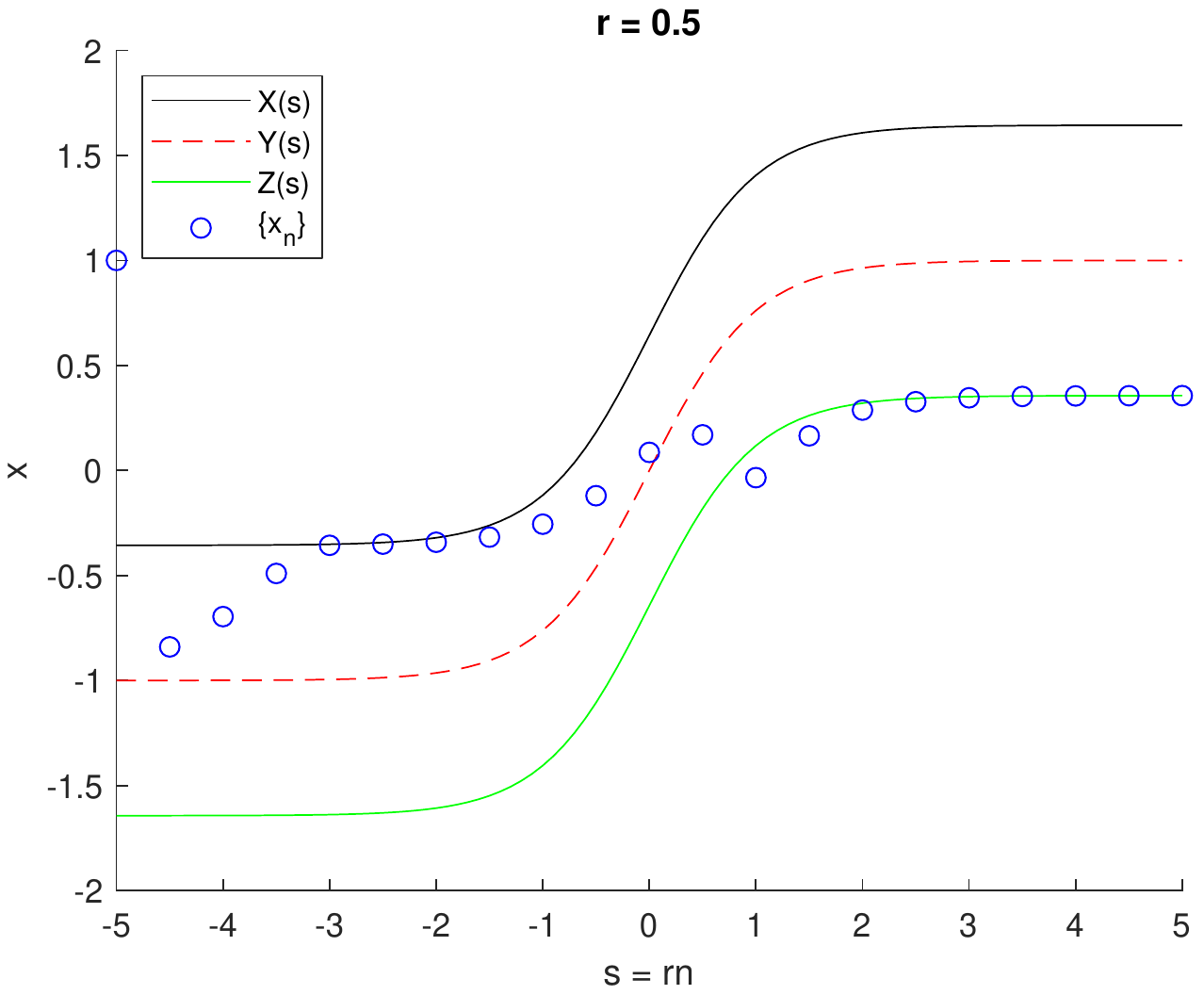}
\caption{Paths and a solution to \eqref{mapWithRate}. Both $(s,X(s)$ and $(s,Z(s))$ are stable paths, while $(s,Y(s))$ is an unstable path. The solution $\{x_n\}$ to \eqref{mapWithRate} with $r = 0.5$ and initial condition $x_{-10} = 1$ is plotted with blue circles where $s = 0.5n$.}
\label{pathsAndSolutions}
\end{figure}

In \cref{sec:UniquePullbackAttractor} we prove that for every stable path $(s,X(s))$ there is a unique solution $\{x_n^r\}$ of \eqref{mapWithRate} that approaches $X_-$ as $n \to -\infty$ and that this unique solution is a local pullback attractor. In \cref{fenichel}, we use geometric singular perturbation theory to show that if $r$ is sufficiently small, the pullback attractor to $X_-$ will approach $X_+$ as $n \to \infty$, which we call {\em endpoint tracking}.

In \cref{conditions} we define rate-induced tipping for maps and give some conditions under which one can expect R-tipping to happen or not. Some of these conditions are different from conditions for R-tipping in flows, and we highlight these differences in \cref{flowsMaps}, giving an example in which the continuous and discrete pullback attractors have drastically different behavior. Finally, in \cref{section:ikeda} we look at an example of rate-induced tipping in the 2-dimensional Ikeda map to highlight the fact that the results given in this paper are not restricted to maps of one dimension.

\section{Existence and Uniqueness of Local Pullback Attractors Beginning on Stable Paths}
\label{sec:UniquePullbackAttractor}

Assume that \cref{mapWithRate} has a stable path $(s,X(s))$ and $X_\pm = \lim_{s \to \pm \infty} X(s)$. The goal of this section is to prove that there is a unique solution of \cref{mapWithRate} that limits to $X_-$ as $n \to -\infty$ (\cref{uniqueSolution}) and that this solution is a local pullback attractor. The proof of \cref{uniqueSolution} relies on an important lemma, which we state and prove here first.

\begin{lemma}\label{epsilonNeigh}
For every sufficiently small $\epsilon > 0$ there exists an $S > 0$ such that if $rN>S$, then there is a unique solution $\{x_n\}$ of \eqref{mapWithRate} that stays within an $\epsilon$-neighborhood of $X_-$ for all $n \le -N$.
\end{lemma}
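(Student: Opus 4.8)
The plan is to realize $\{x_n\}$ as the limit of finite-length forward orbits, exploiting that near $X_-$ the map is a uniform contraction in a suitable norm. First I would fix an adapted norm $|\cdot|$ on $\R^\ell$ for the matrix $D_xf(X_-,\lambda_-)$, so that its operator norm is $\rho(D_xf(X_-,\lambda_-))+\eta<1$ for some small $\eta$; by continuity of $D_xf$ there are $\delta>0$ and $\tilde\kappa\in(0,1)$ with $|D_xf(z,\lambda)|\le\tilde\kappa$ whenever $|z-X_-|\le\delta$ and $|\lambda-\lambda_-|\le\delta$. Integrating along segments (which stay in the ball by convexity) yields the key estimate $|f(z,\lambda)-f(w,\lambda)|\le\tilde\kappa|z-w|$ for all $z,w\in\overline{B_\delta(X_-)}$ and $\lambda\in\overline{B_\delta(\lambda_-)}$. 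This is where ``sufficiently small $\epsilon$'' enters: I treat only $\epsilon\le\delta$, and by equivalence of norms it is harmless to measure the $\epsilon$-neighborhood in the adapted norm. Next, using $\Lambda(s)\to\lambda_-$ and $f(X_-,\lambda_-)=X_-$, I choose $S>0$ so that $s\le-S$ forces both $\Lambda(s)\in\overline{B_\delta(\lambda_-)}$ and $|f(X_-,\Lambda(s))-X_-|\le(1-\tilde\kappa)\epsilon$.

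For existence, assume $rN>S$ and for each integer $k\ge N$ define a finite orbit by $x^{(k)}_{-k}=X_-$ and $x^{(k)}_{n+1}=f(x^{(k)}_n,\Lambda(rn))$ for $-k\le n\le -N-1$. Since every index $n$ used satisfies $rn\le-S$, an induction based on $|f(z,\Lambda(rn))-X_-|\le\tilde\kappa|z-X_-|+|f(X_-,\Lambda(rn))-X_-|\le\tilde\kappa\epsilon+(1-\tilde\kappa)\epsilon=\epsilon$ shows $x^{(k)}_n\in\overline{B_\epsilon(X_-)}$ for all $-k\le n\le-N$. For $k'>k$ the two orbits obey the same recursion on $-k\le n\le-N$ with data differing by at most $2\epsilon$ at $n=-k$, so the contraction estimate gives $|x^{(k)}_n-x^{(k')}_n|\le 2\epsilon\,\tilde\kappa^{\,n+k}$; hence $(x^{(k)}_n)_k$ is Cauchy for each fixed $n\le-N$, and I set $x_n:=\lim_{k\to\infty}x^{(k)}_n$. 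Passing to the limit in the recursion (continuity of $f$) shows $\{x_n\}_{n\le-N}$ solves \eqref{mapWithRate}, lies in $\overline{B_\epsilon(X_-)}$, and extends to a solution for all $n$ by forward iteration. Running the construction with target radius $\epsilon/2$ upgrades ``$\overline{B_\epsilon}$'' to ``strictly inside the $\epsilon$-neighborhood'' if the statement is read with an open neighborhood.

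For uniqueness, suppose $\{x_n\}$ and $\{y_n\}$ are two solutions lying in the $\epsilon$-neighborhood of $X_-$ for all $n\le-N$. Since $\epsilon\le\delta$ and $rm\le-S$ at every relevant step, the contraction estimate applies, giving $|x_n-y_n|\le\tilde\kappa^{\,n-m}|x_m-y_m|\le 2\epsilon\,\tilde\kappa^{\,n-m}$ for all $m\le n\le-N$; letting $m\to-\infty$ forces $x_n=y_n$.

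I expect the only genuine difficulty to be bookkeeping rather than ideas: tracking exactly which indices $n$ satisfy $rn\le-S$ under the hypothesis $rN>S$, handling the adapted-versus-Euclidean norm and the open-versus-closed neighborhood, and observing that non-invertibility of $f(\cdot,\lambda)$ is irrelevant because the whole argument runs forward in $n$ and never inverts the map.
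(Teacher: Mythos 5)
Your proof is correct. The preparatory work is identical to the paper's: pass to an adapted operator norm so that $D_xf$ is a uniform contraction with constant $\tilde\kappa<1$ on a $\delta$-ball around $(X_-,\lambda_-)$, restrict to $\epsilon\le\delta$, and choose $S$ so that for $s\le -S$ both $\Lambda(s)$ lies in the $\delta$-ball and $\lVert f(X_-,\Lambda(s))-X_-\rVert\le(1-\tilde\kappa)\epsilon$, which is exactly the inequality that keeps forward iterates inside $\overline{B_\epsilon(X_-)}$.

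Where you diverge is in the machinery for existence and uniqueness. The paper introduces the complete metric space $\mathcal{P}$ of sequences $\{x_n\}_{n\le -N}$ with values in $\overline{B_\epsilon(X_-)}$ under the sup-metric, defines the shift-and-compose map $\phi(\{x_n\})_m=f(x_{m-1},\Lambda(r(m-1)))$, checks $\phi(\mathcal{P})\subset\mathcal{P}$ and that $\phi$ has Lipschitz constant $c<1$, and then gets existence and uniqueness in one stroke from the contraction mapping theorem. You instead construct truncated forward orbits $x^{(k)}$ launched from $X_-$ at time $-k$, show by the same contraction estimate that for fixed $n$ they form a Cauchy sequence in $k$ with $\lvert x^{(k)}_n-x^{(k')}_n\rvert\le 2\epsilon\,\tilde\kappa^{\,n+k}$, pass to the limit, and then prove uniqueness separately by letting the comparison index $m$ recede to $-\infty$. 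The two proofs use precisely the same estimates; yours is the explicit, iterate-and-pass-to-the-limit version of the abstract fixed-point argument, and is perhaps friendlier to readers who prefer not to set up the sequence space. The paper's version is slightly more economical because uniqueness is automatic once $\phi$ is shown to be a contraction, and because it never requires choosing a family of truncation times. Your closing remarks about adapted versus Euclidean norms, open versus closed balls, and the irrelevance of non-invertibility are all correct and match implicit choices made in the paper.
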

\begin{proof}
Let $\epsilon > 0$. For the sake of simplicity, suppose $X_- = 0$ and $\lambda_- = 0$. By definition of a stable path, all eigenvalues of $A := D_xf(0,0)$ have norm less than 1. If we pick $c \in (\rho(A),1)$, then by Lemma 5.6.10 of \cite{horn}, there is a matrix norm $||| \cdot |||$ such that $|||A||| < c$. Moreover, this is a matrix norm induced by a vector norm $|| \cdot ||$ on $\R^\ell$ (see Example 5.6.4 of \cite{horn}). Therefore,
\begin{equation}\label{norm}
||Ax_0|| \le |||A||| \ ||x_0|| < c ||x_0||
\end{equation}
for all $x_0 \in \R^\ell$. Since $f$ is $C^1$, for any $x_0,y_0 \in \R^\ell$ we can write
\begin{equation}\label{meanValue}
f(x_0,\lambda)-f(y_0,\lambda) = \left[\int_0^1 D_x f(tx_0 + (1-t)y_0,\lambda) dt\right] (x_0-y_0) =: g(x_0,y_0,\lambda)(x_0-y_0)
\end{equation}
where $g$ is continuous and $g(0,0,0) = A$. Therefore, by \cref{norm} and \cref{meanValue} we can make $\epsilon$ smaller if necessary and choose $\delta > 0$ such that if $x_0,y_0 \in \overline{B_\epsilon(0)}$ and $\lambda \in B_\delta(0)$, then
\begin{equation}\label{contraction}
|| f(x_0,\lambda) - f(y_0,\lambda) ||= ||g(x_0,y_0,\lambda)(x_0-y_0)|| < c ||x_0-y_0||.
\end{equation}

Now, let $S>0$ be sufficiently large so that $\Lambda(rn) \in B_\delta(0)$ and $||f(0,\Lambda(rn))|| \le \epsilon(1-c)$ for all $rn < -S$. Fix $r,N>0$ such that $rN > S$. Let $\mathcal{P}$ denote the set of sequences $\{x_n\}_{n = -\infty}^{-N}$ such that $x_n \in \overline{B_\epsilon(0)}$ for all $n \le -N$. For $\{x_n\} \in \mathcal{P}$, define
$$||\{x_n\}|| := \sup_{n \le -N} ||x_n||$$
using the same vector norm as above. We define the distance between two sequences in $\mathcal{P}$ to be $d(\{x_n\},\{y_n\}) := ||\{x_n - y_n\}||$. Then $\mathcal{P}$ is a complete metric space under this distance function. Define $\phi: \mathcal{P} \to \mathcal{P}$ by
\begin{align*}
&\phi(\ldots, x_{-N-2}, x_{-N-1}, x_{-N}) \\
&= \big(\ldots, f(x_{-N-3},\Lambda(r(-N-3))), f(x_{-N-2},\Lambda(r(-N-2))), f(x_{-N-1},\Lambda(r(-N-1)))\big).
\end{align*}
To verify that the image of $\mathcal{P}$ under $\phi$ is a subset of $\mathcal{P}$, pick $\{x_n\} \in \mathcal{P}$. For any fixed $n$, $x_n \in \overline{B_\epsilon(0)}$. Then
\begin{align*}
||f(x_n,\Lambda(rn))|| & \le ||f(x_n,\Lambda(rn)) - f(0,\Lambda(rn))|| + ||f(0,\Lambda(rn))|| \\
& < c||x_n|| + \epsilon(1-c) \text{, by \cref{contraction}}\\
& \le \epsilon c + \epsilon(1-c) \\
& = \epsilon,
\end{align*}
so $f(x_n,\Lambda(rn)) \in \overline{B_\epsilon(0)}$. Thus, $\phi(\{x_n\}) \in \mathcal{P}$. A sequence in $\mathcal{P}$ is a fixed point under $\phi$ if and only if it is the beginning of a solution of \cref{mapWithRate}. Furthermore, $\phi$ is a contraction mapping because if $\{x_n\},\{y_n\} \in \mathcal{P}$,
\begin{align*}
d(\phi(\{x_n\}),\phi(\{y_n\})) & = ||\phi(\{x_n\}) - \phi(\{y_n\}) || \\
& = \sup_{n \le -N} ||f(x_{n-1},\Lambda(r(n-1))) - f(y_{n-1},\Lambda(r(n-1)))|| \\
& \le c \sup_{n \le -N} ||x_{n-1} - y_{n-1}|| \text{, by } \cref{contraction} \\
& \le c \sup_{n \le -N} ||x_n - y_n|| \\
& = c \ d(\{x_n\},\{y_n\}).
\end{align*}
The unique fixed point of this mapping gives the only solution $\{x_n\}$ of \cref{mapWithRate} that stays within an $\epsilon$-neighborhood of $0$ for all $n \le -N$.
\end{proof}

Therefore, we can conclude

\begin{thm}\label{uniqueSolution}
For each $r > 0$, there is a unique solution $\{x_n^r\}$ of \cref{mapWithRate} satisfying $\lim_{n \to -\infty} x_n^r = X_-$.
\end{thm}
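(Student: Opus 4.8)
<br>

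The plan is to deduce Theorem~\ref{uniqueSolution} from Lemma~\ref{epsilonNeigh} by a compatibility argument across different choices of $\epsilon$ and $N$. The key observation is that a solution $\{x_n\}$ of \eqref{mapWithRate} satisfies $\lim_{n \to -\infty} x_n = X_-$ if and only if for \emph{every} sufficiently small $\epsilon > 0$ there is some $N$ with $x_n$ in the $\epsilon$-neighborhood of $X_-$ for all $n \le -N$. So existence and uniqueness of such a solution should follow from the family of local statements provided by the lemma, once we check those local solutions are consistent with one another.

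\medskip
\noindent\textbf{Existence.} Fix any sufficiently small $\epsilon_0 > 0$ and let $S_0$ be the corresponding constant from Lemma~\ref{epsilonNeigh}. Since $r > 0$ is fixed, choose $N_0$ with $r N_0 > S_0$; the lemma produces a unique sequence $\{x_n\}_{n \le -N_0}$ in the $\epsilon_0$-neighborhood of $X_-$ that satisfies the recursion \eqref{mapWithRate} for $n \le -N_0$. Extend this to all $n \in \Z$ by iterating $f$ forward: set $x_{n+1} = f(x_n, \Lambda(rn))$ for $n \ge -N_0$. This defines a full solution $\{x_n^r\}_{n \in \Z}$ of \eqref{mapWithRate}. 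It remains to verify $\lim_{n \to -\infty} x_n^r = X_-$, i.e.\ that for every smaller $\epsilon \in (0,\epsilon_0)$ there is an $N$ with $x_n^r$ in the $\epsilon$-ball for all $n \le -N$. Here I would apply the lemma again at level $\epsilon$: it gives a unique solution $\{y_n\}_{n \le -N}$ in the $\epsilon$-neighborhood for suitable $N$ (with $rN$ exceeding the $S$ associated to $\epsilon$, and $N \ge N_0$). Since $\{y_n\}$ also lies in the $\epsilon_0$-neighborhood and satisfies \eqref{mapWithRate} for all $n \le -N_0$, uniqueness at level $\epsilon_0$ (restricted to $n \le -\min(N,N_0)$, using that the tail of a solution in $\mathcal{P}$ is again such a solution) forces $\{y_n\}$ to agree with $\{x_n^r\}$ on their common domain. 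Hence $x_n^r$ is in the $\epsilon$-ball for all $n \le -N$, as needed.

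\medskip
\noindent\textbf{Uniqueness.} Suppose $\{x_n\}$ and $\{\tilde x_n\}$ are both solutions of \eqref{mapWithRate} with $\lim_{n \to -\infty} x_n = \lim_{n \to -\infty} \tilde x_n = X_-$. Fix a sufficiently small $\epsilon_0 > 0$. By the limit hypothesis there is an $N$, which we may take with $rN$ larger than the $S$ from Lemma~\ref{epsilonNeigh} at level $\epsilon_0$, such that both $x_n$ and $\tilde x_n$ lie in the $\epsilon_0$-neighborhood of $X_-$ for all $n \le -N$. The lemma asserts that the solution with this property is unique, so $x_n = \tilde x_n$ for $n \le -N$; since both satisfy the same forward recursion \eqref{mapWithRate}, they agree for all $n \in \Z$.

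\medskip
I expect the only real subtlety to be bookkeeping: making sure the "sufficiently small $\epsilon$" threshold and the dependence $S = S(\epsilon)$ in Lemma~\ref{epsilonNeigh} are handled uniformly, and that when we shrink $\epsilon$ we also enlarge $N$ so that $rN > S(\epsilon)$ remains valid for the \emph{fixed} $r$. One should note explicitly that shrinking $\epsilon$ can only increase the required $S$, but since $r$ is fixed and positive there is always an $N$ large enough, so no constraint like "$r$ large" is needed here—in contrast to the lemma, Theorem~\ref{uniqueSolution} holds for all $r > 0$. The consistency step—that the $\epsilon$-level local solution is the restriction of the $\epsilon_0$-level one—is immediate once we observe that any tail of a fixed point of $\phi$ is a fixed point of the analogous $\phi$ with a larger $N$, so the two uniqueness statements are compatible rather than independent.
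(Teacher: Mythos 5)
Your proof is correct and takes essentially the same route as the paper's: both deduce the theorem from Lemma~\ref{epsilonNeigh} by applying the lemma's uniqueness clause at a fixed larger $\epsilon$-level with an enlarged $N$ to show the solutions produced at smaller $\epsilon$-levels coincide (yielding the limit), and then derive uniqueness of the limiting solution similarly. (Minor typo: since you stipulated $N \ge N_0$, the common domain in the consistency step is $n \le -\max(N,N_0) = -N$, not $n \le -\min(N,N_0)$, because $\{y_n\}$ is only defined for $n \le -N$.)
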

\begin{proof}
Fix $r > 0$. Pick $\epsilon > 0$ sufficiently small for \cref{epsilonNeigh}. Then there is an $N \in \N$ and a unique solution $\{x_n^r\}$ of \cref{mapWithRate} such that $||x_n^r - X_-|| < \epsilon$ for all $n \le -N$. Let $\delta \in (0,\epsilon)$. Then there is an $N_1 \ge N$ and a unique solution $\{y_n^r\}$ of \cref{mapWithRate} such that $||y_n^r - X_-|| < \delta$ for all $n \le -N_1$. Since $||x_n^r - X_-|| , ||y_n^r - X_-|| < \epsilon$ for all $n \le -N_1$, uniqueness implies $x_n^r = y_n^r$ for all $n \in \Z$. In particular, $||x_n^r - X_-|| < \delta$ for all $n \le -N_1$. Since our choice of $\delta \in (0,\epsilon)$ was arbitrary, $\lim_{n \to -\infty} x_n^r = X_-$.

Furthermore, there cannot be another $\{z_n^r\}$ satisfying $\lim_{n \to -\infty} z_n^r = X_-$ because that would violate uniqueness in \cref{epsilonNeigh}.
\end{proof}

Following the example of \cite{sebas}, we would like to use the term {\em local pullback attractor} to describe this unique solution $\{x_n^r\}$ from \cref{uniqueSolution}. To justify doing this, we will show that $\{x_n^r\}$ is related to the pullback attractors defined in \cite{nonauto}, although it is not compact and it may not be a global attractor. We introduce the following notation to help state precisely what we mean by local pullback attractor:

\begin{defn} Let $\phi(n_1,n_0,y)$ denote $y_{n_1}$ where $\{y_n\}$ is a solution to \cref{mapWithRate} with initial condition $y_{n_0} = y$ and $n_0 \le n_1$.
\end{defn}

Then we have

\begin{prop}\label{pullbackAttr}
The solution $\{x_n^r\}$ guaranteed by \cref{uniqueSolution} is a local pullback attractor, in the sense that there exists an open $U \subset \R^\ell$ containing $X_-$ such that for all $y \in U$,
$$\lim_{n_0 \to -\infty} ||\phi(n_1,n_0,y) - x_{n_1}^r|| = 0.$$
\end{prop}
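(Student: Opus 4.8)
The plan is to reuse the contraction built in the proof of \cref{epsilonNeigh} and combine it with continuous dependence on initial conditions. Normalize $X_- = 0$ and $\lambda_- = 0$ as there, and retain the adapted vector norm $\|\cdot\|$, the constant $c \in (\rho(A),1)$, the radius $\epsilon$, and the numbers $\delta,S$ from that proof; set $U = B_\epsilon(0)$. Fix $r > 0$ and choose $N \in \N$ with $rN > S$, so that the solution $\{x_n^r\}$ from \cref{uniqueSolution} satisfies $x_n^r \in \overline{B_\epsilon(0)}$ for all $n \le -N$, while $\Lambda(rn) \in B_\delta(0)$ and $\|f(0,\Lambda(rn))\| \le \epsilon(1-c)$ whenever $n \le -N$.

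First I would check that the forward orbit of any $y \in U$ launched far enough in the past stays inside $\overline{B_\epsilon(0)}$ up to time $-N$: for $n_0 \le -N$ the induction from \cref{epsilonNeigh}, using $\|f(x,\Lambda(rn))\| \le c\|x\| + \epsilon(1-c) \le \epsilon$, shows $\phi(n,n_0,y) \in \overline{B_\epsilon(0)}$ for all $n_0 \le n \le -N$. On the index range $n_0 \le n \le -N$ both $\phi(n,n_0,y)$ and $x_n^r$ lie in $\overline{B_\epsilon(0)}$ and $\Lambda(rn) \in B_\delta(0)$, so applying \eqref{contraction} one iterate at a time and telescoping gives
$$\|\phi(n,n_0,y) - x_n^r\| \le c^{\,n-n_0}\,\|y - x_{n_0}^r\| \le 2\epsilon\, c^{\,n-n_0}, \qquad n_0 \le n \le -N.$$
Taking $n = -N$ yields $\|\phi(-N,n_0,y) - x_{-N}^r\| \le 2\epsilon\, c^{-N-n_0} \to 0$ as $n_0 \to -\infty$, which already establishes the conclusion for every fixed $n_1 \le -N$.

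For a fixed $n_1 > -N$, I would invoke the cocycle identity $\phi(n_1,n_0,y) = \phi\big(n_1,-N,\phi(-N,n_0,y)\big)$, valid once $n_0 \le -N$, together with continuity of $\phi(n_1,-N,\cdot)$, which is a composition of the finitely many continuous maps $x \mapsto f(x,\Lambda(rk))$ for $-N \le k < n_1$. Since $\phi(-N,n_0,y) \to x_{-N}^r$ was just shown, continuity gives $\phi(n_1,n_0,y) \to \phi(n_1,-N,x_{-N}^r) = x_{n_1}^r$ as $n_0 \to -\infty$. If the statement is wanted in the Euclidean norm rather than $\|\cdot\|$, equivalence of norms on $\R^\ell$ closes the gap.

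The one place to be careful, and the step I expect to need the most attention, is the index bookkeeping: the contraction estimate \eqref{contraction} is only available while both orbits remain in $\overline{B_\epsilon(0)}$ and $\Lambda(rn)$ remains in $B_\delta(0)$, which \cref{epsilonNeigh} guarantees only for $n \le -N$. Hence the geometric decay can be harvested solely on $(-\infty,-N]$, and the passage to the later fixed time $n_1$ must rely on plain continuity over finitely many iterates; there is no uniform contraction once $\Lambda(rn)$ exits $B_\delta(0)$, but none is needed because $n_1$ is fixed.
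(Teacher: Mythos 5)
Your proof is correct, and it takes a genuinely different route from the paper. The paper postpones this proposition until after \cref{fenichel} precisely because its proof reuses the graph-transform machinery: it sets $U = B_\epsilon(X_-)$ with $\epsilon$ as in \cref{contractionMapping}, picks a function $u \in S$ that is constantly $y$ far in the past, and invokes the contraction property of the graph transform to get $\|G_n u - v\|_\infty \to 0$, reading off the pointwise convergence $\phi(n_1, n_1 - n, y) \to x_{n_1}^r$. You instead go back to the elementary contraction set up inside the proof of \cref{epsilonNeigh}: confine the orbit to $\overline{B_\epsilon(0)}$ by the same inductive bound, telescope the one-step contraction \eqref{contraction} on the range $n_0 \le n \le -N$ to get $\|\phi(-N,n_0,y) - x_{-N}^r\| \le 2\epsilon\, c^{-N-n_0} \to 0$, and then push forward by finitely many continuous steps via the cocycle identity. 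Your index bookkeeping is right: the geometric decay is harvested only on $(-\infty, -N]$, and no contraction is needed beyond $-N$ since $n_1$ is fixed.

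One thing your approach buys that is worth flagging: the paper's graph-transform contraction (\cref{contractionMapping}) is established only for $r$ sufficiently small, so the paper's proof of the proposition as written really only covers small $r$, even though the proposition (and \cref{uniqueSolution} before it) is stated for arbitrary $r > 0$. Your proof relies only on \cref{epsilonNeigh}, where for any fixed $r > 0$ one simply takes $N$ large enough that $rN > S$, so it handles all $r > 0$ directly. In that sense your argument is not only more elementary but closes a gap in generality.
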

We postpone the proof of \cref{pullbackAttr} until the end of \cref{fenichel}, since some notation and results given in \cref{fenichel} will be used in the proof.

Based on \cref{pullbackAttr}, it is appropriate to refer to $\{x_n^r\}$ as the {\em local pullback attractor to $X_-$} or simply the {\em pullback attractor to $X_-$}. If it's clear which backward limit point is being referred to from context, we may not mention $X_-$.

\section{Endpoint Tracking for Small Rates}\label{fenichel}
As established in \cref{sec:UniquePullbackAttractor}, if \cref{mapWithRate} has a stable path $(s,X(s))$ with $X_\pm = \lim_{s \to \pm \infty} X(s)$, the pullback attractor $\{x_n^r\}$ is the unique solution of \cref{mapWithRate} that approaches $X_-$ as $n \to -\infty$. When we eventually define rate-induced tipping, we will be looking at the behavior of $\{x_n^r\}$ as $n \to \infty$. In particular, we want to know whether $\lim_{n \to \infty} x_n^r = X_+$ or not:

\begin{defn}
Let $\{x_n^r\}$ be the pullback attractor to $X_-$. If $\lim_{n \to \infty} x_n^r = X_+$, then the pullback attractor {\em endpoint tracks} the path $(s,X(s))$.
\end{defn}

Our goal in this section is to establish that the pullback attractor will endpoint track the path as long as $r > 0$ is sufficiently small; in particular, we can force the pullback attractor to stay as close to the path as desired by choosing $r$ small. This result is similar to Fenichel's Theorem for the perturbation of invariant manifolds (see Theorem 1 of \cite{fenichel}). Unfortunately, we cannot rely on the classic result for what we want to show here for two reasons: we are working with a map (which is not necessarily a diffeomorphism) and the stable path is not compact. Nevertheless, we can prove our result using similar methods to the proof of Fenichel's Theorem (for example, see Chapter 2 of \cite{kuehn}). \par

To be precise, we want to prove

\begin{thm}\label{TheTheorem}
Let $\epsilon > 0$. When $r > 0$ is sufficiently small, there is a solution $\{y_n\}$ of \cref{mapWithRate} such that $||y_n - X(rn)|| < \epsilon$ for all $n \in \Z$. 
\end{thm}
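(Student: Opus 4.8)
The plan is to pass to the deviation variable $u_n := y_n - X(rn)$ and to produce $\{u_n\}$ as the unique fixed point of a contraction on $\ell^\infty(\Z,\R^\ell)$, in the spirit of the proof of \cref{epsilonNeigh} but now on all of $\Z$ and along the moving path $X(rn)$ rather than at a single equilibrium. Since $X(rn)$ is an equilibrium of $f(\cdot,\Lambda(rn))$, a sequence $\{y_n\}$ solves \cref{mapWithRate} if and only if $u_n := y_n - X(rn)$ satisfies
\[
u_{n+1} = f\big(X(rn)+u_n,\Lambda(rn)\big) - X\big(r(n+1)\big) = B_n u_n + h_n(u),
\]
where $B_n := D_x f(X(rn),\Lambda(rn))$ and, using the integral Taylor remainder as in \cref{meanValue},
\[
h_n(u) := \Big[f\big(X(rn)+u_n,\Lambda(rn)\big) - f\big(X(rn),\Lambda(rn)\big) - B_n u_n\Big] + \Delta_n, \qquad \Delta_n := X(rn) - X\big(r(n+1)\big).
\]
Two elementary facts feed the estimates. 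First, $s \mapsto X(s)$ and $s\mapsto D_xf(X(s),\Lambda(s))$ are continuous on the compact space $\R\cup\{\pm\infty\}$, hence uniformly continuous, while $rn$ and $r(n+1)$ become uniformly close (in the two-point compactification) as $r\to 0^+$; consequently $\sup_n\|\Delta_n\|\to 0$ and $\sup_n\|B_{n+1}-B_n\|\to 0$ as $r\to 0^+$. Second, by uniform continuity of $D_xf$ on a compact neighborhood of $\{(X(s),\Lambda(s)) : s\in\R\cup\{\pm\infty\}\}$, for each $\gamma>0$ there is $\epsilon_\gamma>0$ (independent of $n$ and $r$) with $\|h_n(u)\|\le\gamma\|u_n\|+\|\Delta_n\|$ when $\|u_n\|\le\epsilon_\gamma$, and $\|h_n(u)-h_n(v)\|\le\gamma\|u_n-v_n\|$ when $\|u_n\|,\|v_n\|\le\epsilon_\gamma$ (the $f(X(rn),\cdot)$ and $\Delta_n$ terms cancel in the difference).

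The crux is a uniform exponential dichotomy for the linear part $u_{n+1}=B_nu_n$. Put $\rho^* := \max_{s\in\R\cup\{\pm\infty\}}\rho\big(D_xf(X(s),\Lambda(s))\big)$, which exists and is $<1$ because the spectral radius is continuous and the path is stable up to its endpoints; fix $c\in(\rho^*,1)$ and set $\Phi(n,m):=B_{n-1}B_{n-2}\cdots B_m$ for $n\ge m$ (with $\Phi(n,n)=I$). I claim there are $K\ge 1$ and $r_0>0$ with $\|\Phi(n,m)\|\le K\,c^{\,n-m}$ for all $n\ge m$ and all $0<r\le r_0$. To see this I use the frozen-coefficient idea from the proof of Fenichel's theorem (cf.\ Chapter~2 of \cite{kuehn}): for each $B$ in the compact family $\mathcal{K}=\{D_xf(X(s),\Lambda(s))\}$, Gelfand's formula gives a power with $\|B^{k}\|<c^{k}$, and by continuity this persists on a neighborhood of $B$ in matrix space; a finite subcover of $\mathcal{K}$, passed to a common power $\ell$, yields a uniform bound $\|B^{\ell}\|\le\theta c^{\ell}$ with $\theta<1$ for all $B\in\mathcal{K}$. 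Grouping $\Phi(n,m)$ into blocks of $\ell$ consecutive factors and replacing, within each block, every factor by the one at the block's left endpoint costs at most $O\big(\sup_n\|B_{n+1}-B_n\|\big)=O(r)$ in norm; choosing $r_0$ small forces each block to have norm $<c^{\ell}$, and bounding the fewer than $\ell$ leftover factors by $\sup_{B\in\mathcal K}\|B\|$ gives the claim with $K$ depending only on $\ell$, $c$, and this sup. This is the step I expect to be the main obstacle: the path is non-compact and, in general, products of matrices each of spectral radius $<1$ need not decay, so the argument genuinely uses both the compactness of $\R\cup\{\pm\infty\}$ (to treat the two tails near $D_xf(X_\pm,\lambda_\pm)$ on the same footing as the interior) and the slowly-varying structure forced by small $r$.

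Granting the dichotomy, $0$ is the only bounded solution on $\Z$ of $u_{n+1}=B_nu_n$ (let $m\to-\infty$ in $u_n=\Phi(n,m)u_m$), and for bounded $\{q_n\}$ the equation $u_{n+1}=B_nu_n+q_n$ has the unique bounded solution $u_n=\sum_{k=-\infty}^{n-1}\Phi(n,k{+}1)q_k$, with $\|u\|_\infty\le\tfrac{K}{1-c}\|q\|_\infty$. Define $\mathcal{T}$ on the closed ball $\mathcal{B}_\eta=\{\{u_n\}\in\ell^\infty(\Z,\R^\ell):\|u\|_\infty\le\eta\}$ by $(\mathcal{T}u)_n=\sum_{k=-\infty}^{n-1}\Phi(n,k{+}1)\,h_k(u)$. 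Choosing, in order, $\gamma$ so small that $\tfrac{K\gamma}{1-c}\le\tfrac12$, then $\eta:=\min(\epsilon,\epsilon_\gamma)$, then (shrinking $r_0$ if necessary) $r_0$ so small that $\sup_n\|\Delta_n\|\le\tfrac{(1-c)\eta}{2K}$ for $0<r\le r_0$, the two facts above yield $\|\mathcal{T}u\|_\infty\le\tfrac{K}{1-c}(\gamma\eta+\sup_n\|\Delta_n\|)\le\eta$ and $\|\mathcal{T}u-\mathcal{T}v\|_\infty\le\tfrac{K\gamma}{1-c}\|u-v\|_\infty\le\tfrac12\|u-v\|_\infty$. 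Since $\mathcal{B}_\eta$ is complete, $\mathcal{T}$ has a unique fixed point $\{u_n\}$; unwinding, $y_n:=X(rn)+u_n$ is a solution of \cref{mapWithRate} with $\|y_n-X(rn)\|\le\eta\le\epsilon$ for all $n\in\Z$, as required. (For $\epsilon$ small this $\{y_n\}$ must coincide with the pullback attractor $\{x_n^r\}$ of \cref{uniqueSolution}, by the uniqueness in \cref{epsilonNeigh}.)
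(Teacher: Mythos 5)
Your argument is correct, but it follows a genuinely different route from the paper. The paper's proof is a graph transform (Hadamard-type) construction in the extended phase space: it lifts to the skew-product map $F_r(s,x)=(s+r,f(x,\Lambda(s)))$, proves a uniformity lemma giving $\|[D_xf(X(s),\Lambda(s))]^n\|<\kappa c^n$ for the \emph{frozen} linearizations, deduces that $F_0^n$ contracts a tubular neighborhood $\mathcal N_\epsilon$ of the critical manifold, compares $F_r^n$ to $F_0^n$ via Lipschitz estimates, and then shows the graph transform $G_n$ is a contraction on curves $u:\R\to\mathcal N_\epsilon$; the fixed curve gives an $F_r$-invariant graph from which the bounded orbit is read off at $s=rk$. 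Your proof is instead a Lyapunov--Perron (variation-of-constants) construction carried out directly on $\ell^\infty(\Z,\R^\ell)$ in the deviation variable $u_n=y_n-X(rn)$, using the Green's-function representation $u_n=\sum_{k<n}\Phi(n,k+1)h_k(u)$. The key technical input you need is a uniform exponential dichotomy for the \emph{time-varying} cocycle $\Phi(n,m)=B_{n-1}\cdots B_m$, $B_k=D_xf(X(rk),\Lambda(rk))$, which is a strictly stronger statement than the paper's uniformity lemma (a product of contractions with spectral radius $<1$ need not decay in general); you get it from the compactness of the spectral family over $\R\cup\{\pm\infty\}$ together with the slow-variation $\sup_n\|B_{n+1}-B_n\|\to 0$ as $r\to 0^+$, via a frozen-coefficient block argument. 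One small quibble: the blockwise replacement cost should be quoted as $o(1)$ as $r\to 0^+$, not $O(r)$ --- $s\mapsto D_xf(X(s),\Lambda(s))$ is only guaranteed continuous (hence uniformly continuous on the compactified line) for $C^1$ maps $f$, not Lipschitz; this does not affect the conclusion since $o(1)$ suffices. The trade-off between the two approaches: the graph transform produces an entire invariant curve and sidesteps the cocycle dichotomy by working with integer powers of the frozen map $F_0$ and then perturbing; your Lyapunov--Perron route works entirely in sequence space and is arguably more elementary, but it front-loads the difficulty into establishing the dichotomy for the slowly-varying product, which is the step you correctly flagged as the main obstacle.
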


As long as $\epsilon$ is chosen small enough, this orbit $\{y_n\}$ must be the pullback attractor to $X_-$ since it is the only orbit that stays within an $\epsilon$-neighborhood of $X_-$ as $n \to -\infty$. To prove \cref{TheTheorem}, it will be easier to work with equation \cref{autonomousMap}. We will find a solution to \cref{autonomousMap} satisfying the conditions of \cref{TheTheorem} and show that it corresponds to a solution of \cref{mapWithRate}.

Our plan for the proof of \cref{TheTheorem} is as follows: In \cref{uniformity,N,contractsOneThird,estimates,fPertContracts,oneHalf} we prove some necessary bounds on the functions of interest. Then we introduce the graph transform map, which acts on curves close to the stable path $(s,X(s))$. We prove that the graph transform is a contraction mapping and hence that there is a unique curve close to $(s,X(s))$ which is invariant under \cref{autonomousMap}. From this curve we will be able to construct the solution $\{y_n\}$ mentioned in \cref{TheTheorem}.

We begin with some setup and notation. Let $M = \{(s,X(s)) : s \in \R\}$ denote the entire stable path. Define $F_r: \R^{\ell + 1} \to \R^{\ell + 1}$ by
$$F_r(s,x) = (s + r,f(x,\Lambda(s)))$$
so that \cref{autonomousMap} can be written as $(s_{n+1},x_{n+1}) = F_r(s_n,x_n)$. Notice that $M$ is invariant under the map $F_0$ (when $r = 0$); see \cref{critMan}. We will think of $F_r$ (for $r > 0$) as a perturbation of $F_0$ and look for an invariant manifold under $F_r$ that is close to $M$. This manifold will give us a solution to \cref{autonomousMap} close to the stable path.

\begin{figure}
	\centering
	\begin{subfigure}{0.48 \textwidth}
		\centering
		\includegraphics[width = \textwidth]{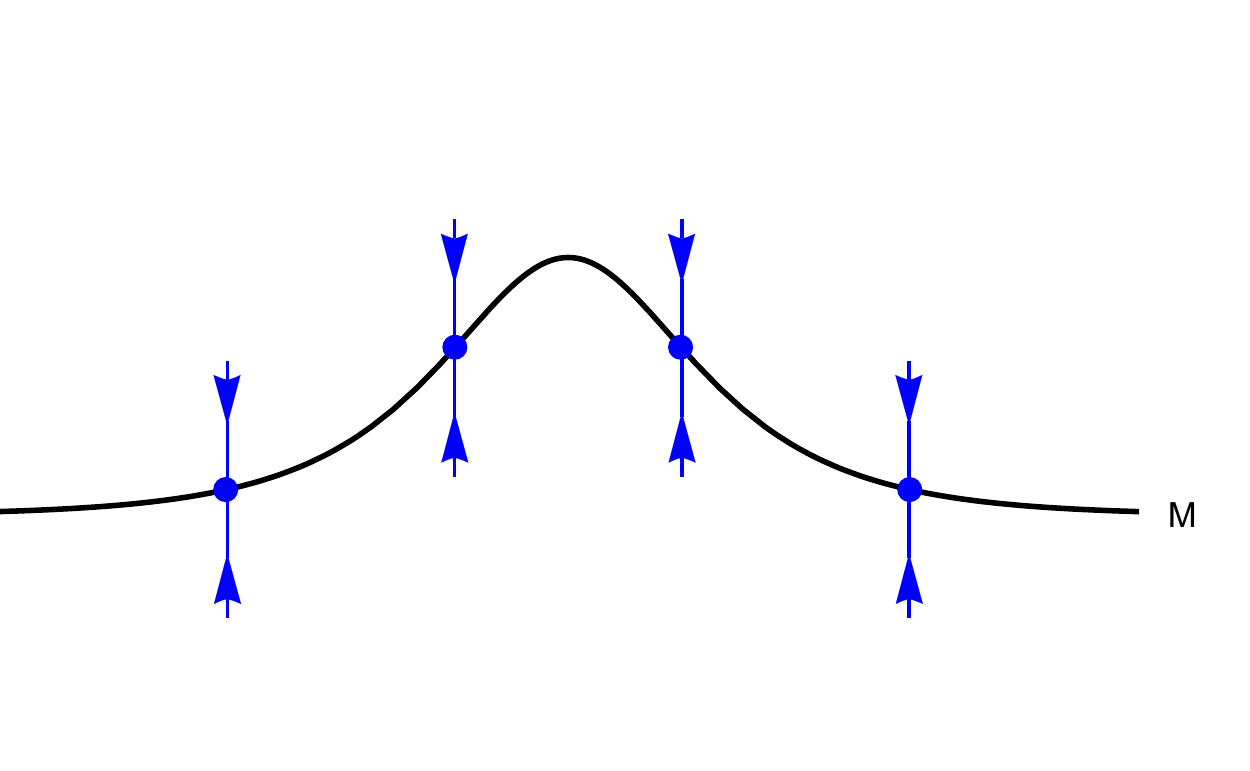}
		\caption{Every point of $M$ is an attracting fixed point under $F_0$. Sets of the form $\{s = s_0\}$ are invariant under $F_0$ since the $s$-coordinate does not change.}
		\label{critMan}
	\end{subfigure}
	\hfill
	\begin{subfigure}{0.48 \textwidth}
		\centering
		\includegraphics[width = \textwidth]{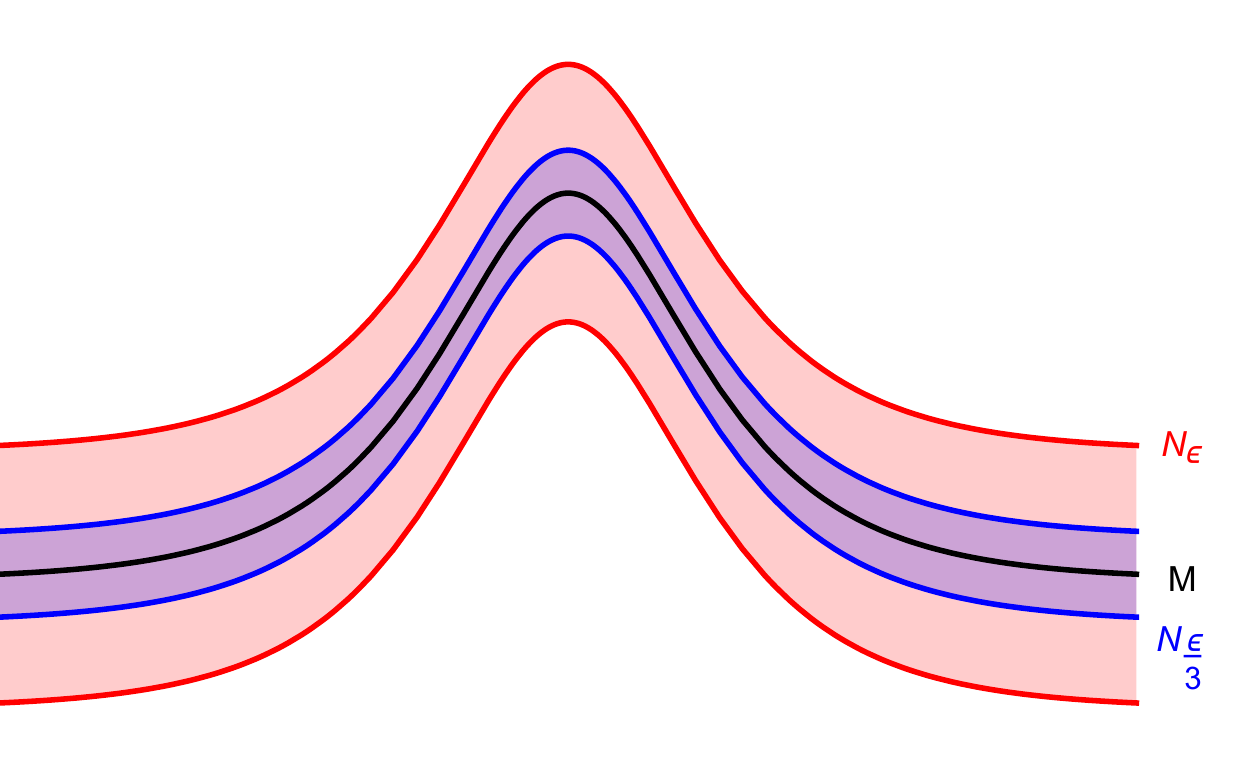}
		\caption{Neighborhoods $\mathcal{N}_\epsilon$ and $\mathcal{N}_{\epsilon/3}$ around $M$; \cref{contractsOneThird} shows that $F_0^n: \mathcal{N}_\epsilon \to \mathcal{N}_{\epsilon/3}$ for certain $n$.}
		\label{NEpsilon}
	\end{subfigure}
	\caption{}
	\label{}
\end{figure}

For notational convenience, define
$$B_n(s) = [D_x f(X(s),\Lambda(s))]^n = D_x f^n(X(s),\Lambda(s)).$$

Let $|| \cdot ||$ denote the standard Euclidean norm as well as the induced matrix norm on square matrices. Then we have the following lemma, similar to the Uniformity Lemma of \cite{kuehn}:

\begin{lemma}\label{uniformity}
There are constants $\kappa > 0$ and $c \in (0,1)$ such that
$$||B_n(s)|| < \kappa c^n$$
for every $s \in \R$ and $n \in \N$.
\end{lemma}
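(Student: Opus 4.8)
The plan is to reduce to a compact setting and then exploit Gelfand's spectral radius formula. Write $A(s) := D_x f(X(s),\Lambda(s))$ for $s \in \R$, and let $\overline{\R} := \R \cup \{\pm\infty\}$ with its usual topology (homeomorphic to a closed interval). Since $X(s) \to X_\pm$ and $\Lambda(s) \to \lambda_\pm$ as $s \to \pm\infty$ (the latter by \eqref{paramShift}) and $f$ is $C^1$, the matrix-valued map $A$ extends continuously to $\overline{\R}$ via $A(\pm\infty) := D_x f(X_\pm,\lambda_\pm)$. By the definition of a stable path, $\rho(A(s)) < 1$ for every $s \in \overline{\R}$. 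Because the spectral radius is continuous on the space of $\ell \times \ell$ matrices and $\overline{\R}$ is compact, there is $\rho_0 \in (0,1)$ with $\rho(A(s)) \le \rho_0$ for all $s \in \overline{\R}$. I fix any $c \in (\rho_0,1)$ once and for all; this is the constant $c$ in the statement.

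The obstacle is that $\rho(\cdot)$ is not submultiplicative (indeed $\|A(s)\|$ itself may exceed $1$ even though $\rho(A(s)) < 1$), so the uniform spectral radius bound does not directly give a uniform bound on $\|A(s)^n\|$. To convert it, I would use Gelfand's formula pointwise: for each $s \in \overline{\R}$, $\|A(s)^n\|^{1/n} \to \rho(A(s)) \le \rho_0 < c$, so there is $m(s) \in \N$ with $\|A(s)^{m(s)}\|^{1/m(s)} < c$. The map $B \mapsto \|B^{m(s)}\|^{1/m(s)}$ is continuous, so $\|A(\sigma)^{m(s)}\|^{1/m(s)} < c$ on some open neighborhood $U_s$ of $s$ in $\overline{\R}$. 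The sets $\{U_s\}_{s \in \overline{\R}}$ cover $\overline{\R}$; taking a finite subcover $U_{s_1},\dots,U_{s_k}$ and setting $M := m(s_1)\cdots m(s_k)$, every $\sigma \in \overline{\R}$ lies in some $U_{s_i}$, whence $\|A(\sigma)^M\| = \|(A(\sigma)^{m(s_i)})^{M/m(s_i)}\| \le \|A(\sigma)^{m(s_i)}\|^{M/m(s_i)} < c^M$. So $\|A(s)^M\| < c^M$ for every $s \in \overline{\R}$, in particular for every $s \in \R$.

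Finally I would bootstrap from the fixed power $M$ to all powers. Set $K := \sup_{s \in \overline{\R}}\,\max_{0 \le j < M} \|A(s)^j\|$, which is finite since each $s \mapsto \|A(s)^j\|$ is continuous on the compact set $\overline{\R}$ and the maximum is over finitely many $j$. For $n \in \N$, write $n = qM + j$ with $0 \le j < M$; then
$$\|B_n(s)\| = \|A(s)^n\| \le \|A(s)^M\|^{q}\,\|A(s)^j\| < c^{qM} K = K c^{-j}\,c^n \le \frac{K}{c^{M-1}}\,c^n .$$
Taking $\kappa := K/c^{M-1} > 0$ completes the proof (note $\kappa \ge 1/c^{M-1} > 1 \ge \|A(s)^0\|$, so the inequality is also strict at $n = 0$).

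The main obstacle, as indicated above, is the middle paragraph: since the spectral radius is not a norm, the pointwise bound $\rho(A(s)) \le \rho_0 < 1$ must be upgraded --- via Gelfand's formula and a finite subcover --- to a uniform bound on the norm of a single iterate $A(s)^M$. The compactification $\overline{\R}$ simultaneously handles the non-compactness of the stable path; once the uniform bound on $\|A(s)^M\|$ is in hand, the bootstrap to $\|A(s)^n\| \le \kappa c^n$ is routine.
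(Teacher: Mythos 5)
Your proof is correct and uses the same core strategy as the paper's: extend $A(s) = D_x f(X(s),\Lambda(s))$ continuously to the compactified line $\overline{\R} = \R \cup \{\pm\infty\}$, apply Gelfand's formula pointwise to get a power of $A(s)$ with norm below $c$ to that power, upgrade pointwise to local by continuity, and patch with a finite subcover. The genuine variation is in how the subcover is exploited. The paper keeps the exponents $N(s_i)$ distinct: given $s$, it picks whichever $N(s_i)$ applies, writes $n = dN(s_i) + r$, and defines $\kappa$ as a supremum over remainders $r < \max_i N(s_i)$. You instead take the product $M := m(s_1)\cdots m(s_k)$ to get a \emph{single} power $M$ with $\|A(s)^M\| < c^M$ uniformly in $s$; the bootstrap $n = qM + j$ is then uniform in $s$ and the constant $\kappa = K/c^{M-1}$ falls out cleanly. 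Your version is a bit tidier and is the standard way to promote pointwise Gelfand estimates to a uniform one on a compact parameter set; the paper's version avoids forming the possibly large product $M$. One small technical point, shared by both arguments: when $q = 0$ (equivalently $n < M$; in the paper, $d = 0$), the chain of inequalities only yields $\|A(s)^n\| \le \kappa c^n$, not the strict inequality in the statement, and your closing remark addresses strictness only at $n = 0$. This is trivially repaired (e.g.\ replace $K$ by $K+1$ in the definition of $\kappa$), but it is worth noting since the lemma as stated asserts a strict bound for all $n \in \N$.
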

\begin{proof}
Since $(s,X(s))$ is a stable path, there is some $c \in (0,1)$ satisfying $\rho(D_x f(X(s),\Lambda(s))) < c$ for all $s \in \R \cup \{\pm \infty\}$ (where $\rho$ denotes the spectral radius). By Gelfand's formula (Corollary 5.6.14 of \cite{horn}),
$$\lim_{n \to \infty} ||B_n(s)||^{1/n} = \rho(D_x f(X(s),\Lambda(s)) < c.$$
We do not know a priori that this limit is uniform in $s$; this is what we want to show.

For each $s$, there exists an $N(s) \in \N$ such that $||B_{N(s)}(s)|| < c^{N(s)}$. By the continuity of $B_{N(s)}$, there is a neighborhood $U(s)$ of $s$ such that for all $s' \in U(s)$,
$$||B_{N(s)}(s')|| < c^{N(s)}.$$
This is also true for $s = \pm \infty$, so we can get a finite subcovering $\{U(s_i)\}_{i = 1}^K$ for the extended real line. Set
$$\kappa = \sup_{s \in \R, \ r \in [0,\max N(s_i)]} \frac{||B_r(s)||}{c^r}.$$
Note that the supremum must be finite because $B_r(s)$ approaches constant matrices as $s \to \pm \infty$. \par

Pick $s \in \R$ and find $s_i$ such that $s \in U(s_i)$. Then pick any $n \in \N$ and write $n = d N(s_i) + r$, where $d,r \in \Z$ and $0 \le r < N(s_i)$. Then
$$B_n(s) = B_{dN(s_i) + r}(s) = [B_{N(s_i)}(s)]^d B_r(s).$$

If we take the norm, we get
\begin{align*}
||B_n(s)|| & \le ||B_{N(s_i)}(s)||^d ||B_r(s)|| \\
& < c^{dN(s_i)} ||B_r(s)|| \\
& = c^n \frac{||B_r(s)||}{c^r} \\
& \le \kappa c^n.
\end{align*}
\end{proof}

We can immediately conclude

\begin{cor}\label{N}
There exists an $N \in \N$ such that $||B_n(s)|| < \frac{1}{4}$ for all $s \in \R$ and $n \ge N$.
\end{cor}

Define a small closed neighborhood around $M$ by
$$\mathcal{N}_\epsilon = \{(s,x): s \in \R, ||x - X(s)|| \le \epsilon\}.$$
See \cref{NEpsilon} for an illustration.

Our next lemma shows that $F_0^n$ ($F_0$ composed with itself $n$ times) contracts $\mathcal{N}_\epsilon$ toward $M$ by at least $\frac{1}{3}$ for certain $n$.

\begin{lemma}\label{contractsOneThird}
For the $N$ in \cref{N} and for $\epsilon > 0$ small enough, $F_0^n: \mathcal{N}_\epsilon \to \mathcal{N}_{\epsilon/3}$ for all $n \in \{N, N+1, \ldots, N(N+1)\}$.
\end{lemma}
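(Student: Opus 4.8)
The idea is to track how far a point in $\mathcal{N}_\epsilon$ sits from $M$ after iterating $F_0$, and to show this distance shrinks below $\epsilon/3$ after exactly $N$ steps (and stays below $\epsilon/3$ for the next $N(N+1)-N$ steps as well). Fix a starting point $(s_0,x_0)\in\mathcal{N}_\epsilon$. Since $F_0$ leaves the $s$-coordinate fixed at $s_0$, the iterates are $(s_0,x_n)$ with $x_{n+1}=f(x_n,\Lambda(s_0))$, and $X(s_0)$ is a fixed point of $x\mapsto f(x,\Lambda(s_0))$. Writing $v_n = x_n - X(s_0)$, a first-order Taylor expansion of $f(\cdot,\Lambda(s_0))$ about $X(s_0)$ gives
\begin{equation*}
v_{n+1} = B_1(s_0)\,v_n + R(x_n,s_0),
\end{equation*}
where $R$ is the remainder, $\|R(x,s_0)\| = o(\|x-X(s_0)\|)$ uniformly in $s_0$ because $D_xf$ is uniformly continuous on the relevant compact-in-$x$, limit-stabilized-in-$s$ region. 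The plan is to first establish the linear estimate cleanly and then absorb the nonlinear remainder by shrinking $\epsilon$.

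First I would iterate the linearization: $v_n$ is, to leading order, $B_n(s_0)v_0$, and by \cref{N} we have $\|B_n(s_0)\| < \tfrac14$ for all $n\ge N$ and all $s_0$. So for the purely linear map one would get $\|v_n\| < \tfrac14\epsilon < \tfrac13\epsilon$ for every $n\ge N$, in particular for all $n$ in $\{N,\dots,N(N+1)\}$. The work is to carry the nonlinear remainder through. The key quantitative input is \cref{uniformity}: $\|B_n(s)\| < \kappa c^n$ uniformly, so the \emph{partial} iterates $v_1,\dots,v_{N-1}$ stay within a fixed multiple (at most $\kappa$, say, after possibly enlarging $\kappa\ge 1$) of $\epsilon$, hence stay in a fixed neighborhood of $M$ on which the remainder bound $\|R\|\le \eta\|x-X(s_0)\|$ holds with $\eta$ as small as we like once $\epsilon$ is small. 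A Gronwall-type induction on $n$ then shows $\|v_n\| \le (\|B_1\|+\eta)^?\cdots$ — more cleanly, group in blocks of length $N$: after one block of $N$ steps, $\|v_N\| \le (\tfrac14 + C\eta)\|v_0\|$ for a constant $C$ depending only on $\kappa,c,N$, where $C\eta$ comes from summing the remainder contributions $\sum_{k} \|B_{N-k}(s_0)\|\,\eta\,\|v_{k}\|$ and using that the intermediate $\|v_k\|$ are bounded by $\kappa\epsilon$. Choosing $\epsilon$ (hence $\eta$) small enough that $\tfrac14 + C\eta < \tfrac13$, we get $\|v_N\| < \tfrac13\epsilon$, and more generally $\|v_n\|<\tfrac13\epsilon$ for all $n\ge N$ since each further block contracts by the same factor $<1$ (and one checks the intermediate iterates within a block starting from a point already in $\mathcal{N}_{\epsilon/3}\subset\mathcal{N}_\epsilon$ never leave $\mathcal{N}_\epsilon$, again by the $\kappa c^n$ bound with $\epsilon$ small). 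In particular this covers every $n\in\{N,N+1,\dots,N(N+1)\}$.

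The main obstacle is making the nonlinear-remainder bookkeeping genuinely \emph{uniform in $s_0\in\R$}: one must know that the Taylor remainder for $f(\cdot,\Lambda(s_0))$ can be controlled by a single modulus of continuity independent of $s_0$, and that the intermediate iterates $v_1,\dots,v_{N-1}$ do not blow the point out of the neighborhood where that estimate is valid. Both follow from \cref{uniformity} together with the fact (used already in its proof) that $D_xf(X(s),\Lambda(s))$ and its iterates converge as $s\to\pm\infty$, so everything reduces to a compact situation; but this uniformity is exactly the point where a naive proof would fail, and it is the step I would write most carefully.
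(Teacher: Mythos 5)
Your proposal follows essentially the same approach as the paper: decompose $F_0^n$ into the linear part $B_n(s_0)$ (contracting by $\frac{1}{4}$ via \cref{N}) plus a nonlinear remainder that is controlled uniformly in $s_0$ by shrinking $\epsilon$, with the uniformity coming from the compactification-at-$\pm\infty$ argument already used in \cref{uniformity}. The paper's proof is a brief sketch of the same idea (and in fact loosely writes the remainder as $\mathcal{O}(\epsilon^2)$, which technically needs $C^2$; your $o(\|v\|)$ estimate is the sharper statement available under the stated $C^1$ hypothesis), so your bookkeeping simply fills in what the paper leaves implicit.
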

\begin{proof}
\cref{N} implies that the linear part of $F_0^n$ contracts $\mathcal{N}_\epsilon$ toward $M$ by at least $\frac{1}{4}$. The nonlinear part of $F_0^n$ is $\mathcal{O}(\epsilon^2)$ and approaches a constant function in the limits as $s \to \pm \infty$, so if we take $\epsilon$ small enough, we will get that $F_0^n: \mathcal{N}_\epsilon \to \mathcal{N}_{\epsilon/3}$ for all $n \in \{N, N+1, \ldots, N(N+1)\}$.
\end{proof}

Define $\pi: \R^{\ell + 1} \to \R^\ell$ to be the projection onto the last $\ell$ coordinates, so $\pi(s,x) = x$. Then $\pi \circ F_r^n: \R^{\ell + 1} \to \R^\ell$, and if $D_x$ denotes the derivative with respect to the last $\ell$ coordinates, then $D_x [\pi \circ F_r^n]$ is an $\ell \times \ell$ matrix.

The next lemma gives an estimate of how close together $F_0^n$ and $F_r^n$ (as well as their derivatives) must be in terms of $r$ and $n$.

\begin{lemma}\label{estimates}
Let $\mathcal{K}$ be a neighborhood of $M$ that is uniformly bounded in the $x$-directions. Then there exist constants $\kappa_n$ depending only on $F_0$ and $n$ such that
$$||F_0^n(s,x) - F_r^n(s,x)||, ||D_x[\pi \circ F_0^n](s,x) - D_x[\pi \circ F_r^n](s,x)|| \le r \kappa_n,$$
for all $(s,x) \in \mathcal{K}$ as long as $F_0^i(s,x)$ and $F_r^i(s,x)$ remain in $\mathcal{K}$ for all $i \le n-1$.
\end{lemma}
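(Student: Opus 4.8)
The plan is to prove both estimates by induction on $n$, exploiting the fact that $F_r$ differs from $F_0$ only through the argument $\Lambda(s)$ versus $\Lambda(s)$ evaluated at a shifted time — more precisely, $F_r(s,x) = (s+r, f(x,\Lambda(s)))$ while $F_0(s,x) = (s, f(x,\Lambda(s)))$, so the two maps agree in the $x$-component at step one and differ only in the $s$-component by exactly $r$. The discrepancy therefore enters only on subsequent iterates, where $F_0^i$ and $F_r^i$ land at $s$-coordinates that differ by $ir$, causing $f$ to be evaluated at $\Lambda(s)$ versus $\Lambda(s+ir)$. Since $\Lambda$ is $C^1$ and (by \eqref{paramShift}) $\Lambda'$ is bounded on $\R$ — call the bound $L_\Lambda$ — we get $\|\Lambda(s) - \Lambda(s')\| \le L_\Lambda |s-s'|$, so the per-step error introduced is $O(r \cdot (\text{iterate index}))$. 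On the uniformly $x$-bounded neighborhood $\mathcal{K}$, the map $f$ and its derivatives $D_x f$, $D_\lambda f$ are uniformly bounded and uniformly Lipschitz (here we use $C^1$ plus compactness of the closure of $\mathcal{K}$ in the $x$-directions and the fact that everything approaches constants as $s \to \pm\infty$), which lets us absorb all the accumulated constants into a single $\kappa_n$ depending only on $F_0$ and $n$.

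First I would set up notation: write $(s_i^0, x_i^0) = F_0^i(s,x)$ and $(s_i^r, x_i^r) = F_r^i(s,x)$, noting $s_i^0 = s$ and $s_i^r = s + ir$, so the $s$-components already satisfy the bound with constant $i$. For the base case $n=1$: the $x$-components are equal, $x_1^0 = f(x,\Lambda(s)) = x_1^r$, and the $s$-components differ by $r$; the derivative statement at $n=1$ compares $D_x[\pi \circ F_0](s,x) = D_x f(x,\Lambda(s))$ with the identical $D_x[\pi \circ F_r](s,x)$, so the difference is zero — both bounds hold with $\kappa_1 = 1$. For the inductive step, I would write
$$x_{n+1}^0 - x_{n+1}^r = f(x_n^0, \Lambda(s)) - f(x_n^r, \Lambda(s+nr))$$
and split it as $\bigl[f(x_n^0,\Lambda(s)) - f(x_n^r,\Lambda(s))\bigr] + \bigl[f(x_n^r,\Lambda(s)) - f(x_n^r,\Lambda(s+nr))\bigr]$. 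The first bracket is bounded by $(\sup_{\mathcal{K}} \|D_x f\|)\,\|x_n^0 - x_n^r\| \le C \cdot r\kappa_n$ by the inductive hypothesis; the second by $(\sup_{\mathcal{K}}\|D_\lambda f\|) \cdot L_\Lambda \cdot nr$. Summing gives the bound for $n+1$ with $\kappa_{n+1} := C\kappa_n + C' n$ for suitable constants. The derivative estimate is handled the same way, using the chain rule $D_x[\pi \circ F_r^{n+1}](s,x) = D_x f(x_n^r, \Lambda(s+nr)) \cdot D_x[\pi \circ F_r^n](s,x)$ and the analogous expansion for $F_0$: add and subtract to separate the error in the outer factor (controlled by Lipschitz continuity of $D_x f$ in both arguments, applied to the already-bounded errors $\|x_n^0 - x_n^r\| \le r\kappa_n$ and $|s - (s+nr)| = nr$) from the error in the inner factor (the inductive hypothesis), and bound the products using the uniform bounds on $\|D_x f\|$ and on $\|D_x[\pi \circ F_0^n]\|$ over $\mathcal{K}$.

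The main obstacle — really the only subtlety beyond bookkeeping — is ensuring the constants are genuinely uniform in $s \in \R$ despite $M$ being noncompact. This is where the hypotheses in \eqref{paramShift} do the work: $\Lambda$ and $\Lambda'$ have limits at $\pm\infty$, hence are globally bounded, and $D_x f(X(s),\Lambda(s))$ together with its neighbors approach constant matrices, so suprema like $\sup_{\mathcal{K}} \|D_x f\|$, $\sup_{\mathcal{K}}\|D_\lambda f\|$, and the Lipschitz constant of $D_x f$ on $\mathcal{K}$ are all finite. I would state this uniform-boundedness observation explicitly at the start of the proof (mirroring how \cref{uniformity} handled the analogous issue via a finite subcover of the extended real line) and then the induction goes through verbatim. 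The requirement that $F_0^i(s,x)$ and $F_r^i(s,x)$ stay in $\mathcal{K}$ for $i \le n-1$ is exactly what licenses applying these uniform bounds at every intermediate step.
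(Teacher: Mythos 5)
Your proof is correct and arrives at the same kind of recursion, but you organize the inductive step differently from the paper. The paper's telescope is
\[
\|F_0^{n+1}(s,x)-F_r^{n+1}(s,x)\| \le \|F_0(F_0^n(s,x))-F_0(F_r^n(s,x))\| + \|F_0(F_r^n(s,x))-F_r(F_r^n(s,x))\|,
\]
and then invokes a single Lipschitz constant $\eta$ for $F_0$ on $\mathcal{K}$ to bound the first term by $\eta\|F_0^n-F_r^n\|$; the second term is exactly $r$ because $F_0$ and $F_r$ differ only by the shift in the $s$-slot. This gives the clean recursion $\kappa_{n+1}=\eta\kappa_n+1$ and never requires opening up the coordinate structure. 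You instead unpack $F_0^n(s,x)=(s,x_n^0)$ and $F_r^n(s,x)=(s+nr,x_n^r)$ and split the $x$-component error into an $x$-variation piece (bounded by $\sup\|D_xf\|$) and a $\lambda$-variation piece (bounded by $\sup\|D_\lambda f\|\cdot L_\Lambda\cdot nr$). In effect, you are supplying an explicit proof of the Lipschitz bound that the paper invokes abstractly, at the cost of a slightly messier recursion $\kappa_{n+1}=C\kappa_n+C'n$. What the paper's version buys is brevity and the fact that the base-case observation ($F_0$ and $F_r$ differ by exactly $(-r,0)$) does all the work at each step; what your version buys is transparency about where the parameter shift actually enters, which makes the uniformity-in-$s$ issue you flag easier to see. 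Both recover the conclusion since only finitely many $n\le N(N+1)$ are needed. One small point worth noting: both your sketch of the derivative estimate (via ``Lipschitz continuity of $D_xf$'') and the paper's terse ``proven similarly'' implicitly use more regularity of $D_xf$ than a bare $C^1$ hypothesis provides; for the lemma to be used in \cref{oneHalf} a modulus-of-continuity estimate would suffice, so this is not a defect particular to your proposal.
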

\begin{proof}
We will prove the result for $F_0, \ F_r$; the result for the derivatives can be proven similarly.

Let $\eta$ be a Lipschitz constant for $F_0$ on $\mathcal{K}$ (which exists because $F_0$ is $C^1$, $\mathcal{K}$ is uniformly bounded in the $x$-directions, and $F_0$ approaches $(s,x) \mapsto (s, f(x,\lambda_\pm))$ as $s \to \pm \infty$). \par

We will use induction to show that $||F_0^n(s,x) - F_r^n(s,x)|| \le r \sum_{i = 0}^{n-1} \eta^i$ for $n \in \N$. The base case when $n = 1$ is true because
$$||F_0(s,x) - F_r(s,x)|| = ||(s,f(x,\Lambda(s))) - (s+r,f(x,\Lambda(s)))|| = ||(-r,0)|| = r.$$

Then, if we assume the statement is true for a given $n \ge 1$,
\begin{align*}
||F_0^{n+1}(s,x) - F_r^{n+1}(s,x)|| & \le ||F_0^{n+1}(s,x) - F_0(F_r^n(s,x))|| + ||F_0(F_r^n(s,x)) - F_r^{n+1}(s,x)|| \\
& \le \eta||F_0^n(s,x) - F_r^n(s,x)|| + r \\
& \le r \eta\left(\sum_{i = 0}^{n-1} \eta^i\right) +r \\
& = r \sum_{i = 0}^n \eta^i
\end{align*}
\end{proof}

Now we can apply \cref{contractsOneThird,estimates} to show that $F_r^n$ maps $\mathcal{N}_\epsilon$ into itself for certain $n$.

\begin{lemma}\label{fPertContracts}
For $r$ and $\epsilon$ sufficiently small, we have
$$F_r^n\left( \mathcal{N}_\epsilon \right) \subset \mathcal{N}_\epsilon$$
for any $n \in \{N,N+1,\ldots,N(N+1)\}$.
\end{lemma}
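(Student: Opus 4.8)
The plan is to combine the $1/3$-contraction of $F_0^n$ from \cref{contractsOneThird} with the $\mathcal{O}(r)$ closeness of $F_r^n$ to $F_0^n$ from \cref{estimates}. Fix $n \in \{N, N+1, \ldots, N(N+1)\}$ and pick $\epsilon > 0$ small enough that \cref{contractsOneThird} holds, i.e. $F_0^n(\mathcal{N}_\epsilon) \subset \mathcal{N}_{\epsilon/3}$. Let $(s,x) \in \mathcal{N}_\epsilon$; then $F_0^n(s,x) = (s', x')$ with $\|x' - X(s')\| \le \epsilon/3$, where $s' = s + nr$ (the $s$-coordinate is unaffected by the choice of $r$ vs. $0$ in this count... wait, actually $F_r^n$ moves $s$ by $nr$ while $F_0^n$ leaves it fixed, so I should be careful to track this). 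Write $F_r^n(s,x) = (\tilde s, \tilde x)$ with $\tilde s = s + nr$. By the triangle inequality,
\begin{equation*}
\|\tilde x - X(\tilde s)\| \le \|\tilde x - x'\| + \|x' - X(s')\| + \|X(s') - X(\tilde s)\|.
\end{equation*}
The middle term is at most $\epsilon/3$. The first term is bounded by $\|F_r^n(s,x) - F_0^n(s,x)\| \le r\kappa_n$ from \cref{estimates} (applied with a fixed uniformly-$x$-bounded neighborhood $\mathcal{K} \supset \mathcal{N}_\epsilon$; one must check the orbit stays in $\mathcal{K}$ for $i \le n-1$, which holds for $r$ small since $F_0$ keeps $\mathcal{N}_\epsilon$ in a slightly larger set and $F_r^i$ is within $r\kappa_i$ of it). The third term, $\|X(s') - X(\tilde s)\| = \|X(s) - X(s+nr)\|$, is at most $r \cdot n \cdot \sup |X'|$, and $X'$ is bounded because $X$ approaches its limits $X_\pm$ with $X' \to 0$ as $s \to \pm\infty$; call this bound $r n L$.

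Putting these together, $\|\tilde x - X(\tilde s)\| \le \epsilon/3 + r\kappa_n + rnL$. Since there are only finitely many values of $n$ in the range $\{N, \ldots, N(N+1)\}$, I can choose $r$ small enough (uniformly over this finite set) that $r \kappa_n + r n L < 2\epsilon/3$ for every such $n$, giving $\|\tilde x - X(\tilde s)\| < \epsilon$, i.e. $F_r^n(\mathcal{N}_\epsilon) \subset \mathcal{N}_\epsilon$. Shrinking $\epsilon$ first (for \cref{contractsOneThird}) and then $r$ (depending on that $\epsilon$ and the $\kappa_n$'s and $L$) is the right order of quantifiers and matches the statement "for $r$ and $\epsilon$ sufficiently small."

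The main obstacle is bookkeeping rather than anything deep: I must ensure that \cref{estimates} genuinely applies, which requires the full orbit segment $F_r^i(s,x)$, $i \le n-1$, to remain inside the fixed neighborhood $\mathcal{K}$ on which the Lipschitz constant $\eta$ and the constants $\kappa_i$ are defined. Since $F_0^i(\mathcal{N}_\epsilon)$ lands in some bounded neighborhood of $M$ (by the same contraction-type argument, or simply by continuity and the asymptotically-constant behavior of $F_0$), I can fix $\mathcal{K}$ once and for all to contain all these intermediate images with room to spare, and then $F_r^i(s,x)$ stays in $\mathcal{K}$ for $r$ small by \cref{estimates} applied inductively to the shorter orbit segments. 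The only other point to watch is that the constant $L$ bounding $\|X'\|$ is finite — this follows from $\Lambda' \to 0$ at $\pm\infty$, the smoothness of $X$ on compact $s$-intervals, and the implicit function theorem giving $X$ as a $C^1$ function of $s$ along the stable path (where $D_x f - I$ is invertible since $\rho(D_x f) < 1$).
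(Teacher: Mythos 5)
Your proof is correct and follows essentially the same approach as the paper: contract via $F_0^n$ using \cref{contractsOneThird}, then absorb the $\mathcal{O}(r)$ deviation from \cref{estimates} by shrinking $r$. The only cosmetic difference is that you explicitly split off the $\|X(s) - X(s+nr)\| \le rnL$ term and justify $L = \sup\|X'\| < \infty$, whereas the paper folds the $s$-shift into a single $\delta$-ball argument (implicitly relying on the same boundedness of $X'$); your version makes that implicit step visible, which is a small gain in rigor.
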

\begin{proof}
By \cref{contractsOneThird}, $F_0^n: \mathcal{N}_\epsilon \to \mathcal{N}_{\epsilon/3}$ for all $n \in \{N, N+1, \ldots, N(N+1)\}$ if $\epsilon > 0$ is small enough. Pick some set $\mathcal{K}$ that is uniformly bounded in the $x$-directions such that $\mathcal{K}$ contains an open neighborhood of $F_0^n(\mathcal{N}_\epsilon)$ for all $n \in \{0,1,\ldots,N(N+1)\}$. Let $\kappa_n$ be as in \cref{estimates}. Pick $\delta > 0$ so that if $(s,x) \in \mathcal{N}_{\epsilon/3}$, then the $\delta$-ball around $(s,x)$ is contained in $\mathcal{N}_\epsilon$. Make $r > 0$ as small as needed so that $r \kappa_n < \delta$ and $F_r^n(\mathcal{N}_\epsilon) \subset \mathcal{K}$ for all $n \in \{0,1,\ldots,N(N+1)\}$.

Pick some $(s,x) \in \mathcal{N}_\epsilon$. Then $F_0^n(s,x) \in \mathcal{N}_{\epsilon/3}$ for all $n \in \{0,1,\ldots,N(N+1)\}$. By \cref{estimates},
$$||F_0^n(s, x) - F_r^n(s, x)|| < r \kappa_n < \delta.$$
This implies that $F_r^n(s, x) \in \mathcal{N}_\epsilon$. Hence, $F_r^n(\mathcal{N}_\epsilon) \subset \mathcal{N}_\epsilon$.
\end{proof}

Finally, we combine previous results to put a bound on the derivative of $\pi \circ F_r^n$.

\begin{lemma}\label{oneHalf}
Assume that $r$ and $\epsilon$ are sufficiently small. Then for all $n \in \{N, N+1, \ldots, N(N+1)\}$ and $(s,x) \in \mathcal{N}_\epsilon$,
$$||D_x [\pi \circ F_r^n](s,x)|| < \frac{1}{2}.$$
\end{lemma}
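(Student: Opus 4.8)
The plan is to first bound $D_x[\pi \circ F_0^n]$ on $\mathcal{N}_\epsilon$ for small $\epsilon$, and then transfer this bound to $F_r^n$ using \cref{estimates}. Since $F_0(s,x) = (s, f(x,\Lambda(s)))$ holds the parameter fixed at $\Lambda(s)$, iterating gives $\pi \circ F_0^n(s,x) = f^n(x,\Lambda(s))$, the $n$-fold composition of the map $y \mapsto f(y,\Lambda(s))$ applied to $x$. Evaluating the $x$-derivative at $x = X(s)$ and using that $X(s)$ is a fixed point of $f(\cdot,\Lambda(s))$, the chain rule yields $D_x[\pi \circ F_0^n](s,X(s)) = [D_x f(X(s),\Lambda(s))]^n = B_n(s)$. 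By \cref{N}, $\|B_n(s)\| < \tfrac14$ for every $s \in \R$ and every $n \ge N$. Because $f$ is $C^1$ and, as $s \to \pm\infty$, $F_0$ and its iterates approach the autonomous constant-parameter maps at $\lambda_\pm$ while $X(s) \to X_\pm$, the map $(s,x) \mapsto D_x[\pi \circ F_0^n](s,x)$ is uniformly continuous on any neighborhood of $M$ that is uniformly bounded in the $x$-directions. Hence, since $\mathcal{N}_\epsilon$ is uniformly bounded in $x$ and $\|x - X(s)\| \le \epsilon$ there, for $\epsilon > 0$ small enough we get $\|D_x[\pi\circ F_0^n](s,x) - B_n(s)\| < \tfrac{1}{12}$, and therefore $\|D_x[\pi\circ F_0^n](s,x)\| < \tfrac13$, for all $(s,x) \in \mathcal{N}_\epsilon$ and all $n$ in the finite set $\{N, N+1, \ldots, N(N+1)\}$.

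Next I would invoke \cref{estimates}. Retain the set $\mathcal{K}$ and the smallness conditions on $r$ and $\epsilon$ from the proof of \cref{fPertContracts}, so that $\mathcal{K}$ is a neighborhood of $M$ uniformly bounded in the $x$-directions and $F_0^i(s,x), F_r^i(s,x) \in \mathcal{K}$ for all $(s,x) \in \mathcal{N}_\epsilon$ and all $i \le N(N+1)$. Then \cref{estimates} provides constants $\kappa_n$, depending only on $F_0$ and $n$, with
$$\|D_x[\pi\circ F_0^n](s,x) - D_x[\pi\circ F_r^n](s,x)\| \le r\kappa_n$$
for every $(s,x) \in \mathcal{N}_\epsilon$ and every $n \in \{N, \ldots, N(N+1)\}$. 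Shrinking $r$ once more so that $r\kappa_n < \tfrac16$ for each of the finitely many $n$ in this range, the triangle inequality gives
$$\|D_x[\pi\circ F_r^n](s,x)\| \le \|D_x[\pi\circ F_0^n](s,x)\| + r\kappa_n < \tfrac13 + \tfrac16 = \tfrac12,$$
which is the desired bound.

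The only genuine obstacle is the uniformity in $s$ in the first paragraph: we need a single $\epsilon$ that makes $\|D_x[\pi\circ F_0^n](s,x)\|$ small simultaneously for \emph{all} $s \in \R$, not merely on compact $s$-intervals. This is precisely where the parameter-shift hypotheses \cref{paramShift} and the existence of the limits $X_\pm$ are used: for $|s|$ large, $F_0$, its iterates, and their $x$-derivatives are uniformly close to the autonomous maps at $\lambda_\pm$ on bounded $x$-sets, so the continuity estimate near $M$ can be taken uniform in $s$. Everything else is bookkeeping over the finite index set $\{N, \ldots, N(N+1)\}$ together with a final shrinking of $r$.
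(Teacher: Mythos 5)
Your proof is correct and follows essentially the same route as the paper: a three-term triangle-inequality decomposition of $D_x[\pi\circ F_r^n]$ into $B_n(s)$ (bounded via \cref{N}), the nonlinear correction $D_x[\pi\circ F_0^n] - B_n$ (made small by taking $\epsilon$ small), and the perturbation $D_x[\pi\circ F_r^n] - D_x[\pi\circ F_0^n]$ (made small by taking $r$ small via \cref{estimates}). You choose constants $\tfrac{1}{12} + \tfrac14 + \tfrac16$ where the paper uses $\tfrac18 + \tfrac14 + \tfrac18$, and you make the uniformity-in-$s$ issue more explicit, but the underlying argument is the same.
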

\begin{proof}
By the triangle inequality, we have
$$||D_x [\pi \circ F_r^n](s,x)|| \le ||D_x [\pi \circ F_r^n](s,x) - D_x [\pi \circ F_0^n](s,x)|| + ||D_x [\pi \circ F_0^n](s,x) - B_n(s)|| + ||B_n(s)||.$$
By \cref{estimates}, we can make $||D_x[\pi \circ F_0^n](s,x) - D_x[\pi \circ F_r^n](s,x)|| < \frac{1}{8}$ on $\mathcal{N}_\epsilon$ for all $n \le N(N+1)$ if $r$ is sufficiently small, and $||D_x [\pi \circ F_0^n](s,x) - B_n(s)|| < \frac{1}{8}$ if $\epsilon$ is small for all $n \in \{N, N+1,\ldots, N(N+1)\}$. Finally, we know from \cref{N} that $||B_n(s)|| < \frac{1}{4}$ for all $s \in \R$ and $n \ge N$. Therefore, we get the desired inequality.
\end{proof}

Let $S$ denote the space of functions $u: \R \to \mathcal{N}_\epsilon$. It is a complete metric space under the sup-norm: $||u||_\infty = \sup_{s \in \R} ||u(s)||$. The graph of $u$ is $\graph(u) = \{(s,u(s)): s \in \R\}$. The map $F_r^n$ defines a map on $S$ in the following way:
$$(s,u(s)) \mapsto (s + nr, \pi \circ F_r^n(s,u(s))) =: (\xi, (G_n u)(\xi)),$$
where $G_n: S \to S$ is called the {\em graph transform} (see \cref{imageOfGraphTransform}). The graph transform is well-defined because $s \mapsto s +nr = \xi$ is a bijection on $\R$, and \cref{fPertContracts} implies that the image of $S$ under $G_n$ truly is a subset of $S$. A fixed point under $G_n$ satisfies
$$F_r^n(\graph(u)) = \graph(u).$$

\begin{figure}
\centering
\includegraphics[scale = 0.7]{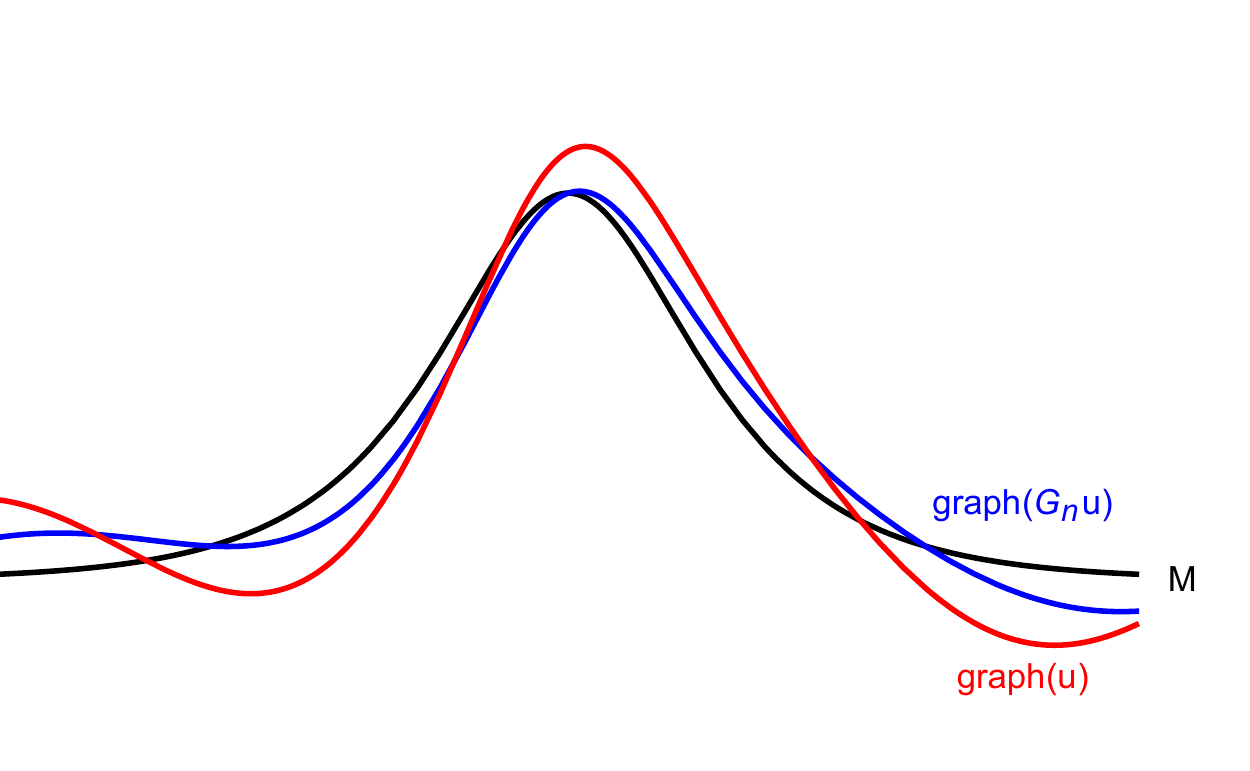}
\caption{The graph transform $G_n$ maps one function $u: \R \to \mathcal{N}_\epsilon$ to another. As is shown in \cref{contractionMapping}, $G_n$ is a contraction mapping, so repeated application of $G_n$ will transform any such function toward a function that is fixed under $G_n$.}
\label{imageOfGraphTransform}
\end{figure}

We will use the contraction mapping theorem to show that there is a unique fixed point of $G_n$ in $S$ for each $n \in \{N,N+1,\ldots,N(N+1)\}$.

\begin{lemma}\label{contractionMapping}
For $\epsilon$ and $r$ sufficiently small and $u,u' \in S$,
$$||G_nu - G_nu'||_\infty \le \frac{1}{2} ||u - u'||_\infty$$
for all $n \in \{N, N+1,\ldots, N(N+1)\}$.
\end{lemma}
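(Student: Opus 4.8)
The plan is to show that the graph transform $G_n$ is Lipschitz with constant $\tfrac12$ by reducing the estimate to a pointwise bound involving the derivative $D_x[\pi \circ F_r^n]$, which has already been controlled in \cref{oneHalf}. The key observation is that $G_n u$ and $G_n u'$ are evaluated at the \emph{same} point $\xi = s + nr$: since $s \mapsto s + nr$ is a bijection on $\R$, for each $\xi$ there is a unique $s$ with $\xi = s + nr$, and $(G_n u)(\xi) = \pi \circ F_r^n(s, u(s))$ while $(G_n u')(\xi) = \pi \circ F_r^n(s, u'(s))$. Thus the $s$-coordinate is identical for both, and only the last $\ell$ coordinates differ.

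First I would fix $n \in \{N, N+1, \ldots, N(N+1)\}$ and $\xi \in \R$, let $s = \xi - nr$, and write
\begin{equation*}
(G_n u)(\xi) - (G_n u')(\xi) = \pi \circ F_r^n(s, u(s)) - \pi \circ F_r^n(s, u'(s)).
\end{equation*}
Since $u, u' \in S$ means $(s, u(s)), (s, u'(s)) \in \mathcal{N}_\epsilon$, and since \cref{fPertContracts} guarantees $\mathcal{N}_\epsilon$ is mapped into itself by $F_r^i$ for the relevant iterates (so the whole segment between $u(s)$ and $u'(s)$ stays in a region where the derivative bound of \cref{oneHalf} applies — here I should make sure $\mathcal{N}_\epsilon$ is convex in the $x$-fiber over each fixed $s$, which it is, being a ball $\{\|x - X(s)\| \le \epsilon\}$), I can apply the mean value inequality in the $x$-variable only:
\begin{equation*}
\|\pi \circ F_r^n(s, u(s)) - \pi \circ F_r^n(s, u'(s))\| \le \left( \sup_{x \in \overline{B_\epsilon(X(s))}} \|D_x[\pi \circ F_r^n](s,x)\| \right) \|u(s) - u'(s)\|.
\end{equation*}
By \cref{oneHalf}, the supremum is bounded by $\tfrac12$, so $\|(G_n u)(\xi) - (G_n u')(\xi)\| \le \tfrac12 \|u(s) - u'(s)\| \le \tfrac12 \|u - u'\|_\infty$. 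Taking the supremum over $\xi \in \R$ (equivalently over $s \in \R$) gives the claim.

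The main obstacle — really a bookkeeping point rather than a deep difficulty — is justifying the mean value inequality along the fiber: I need the segment $\{(s, t u(s) + (1-t) u'(s)) : t \in [0,1]\}$ to lie inside the region $\mathcal{N}_\epsilon$ on which \cref{oneHalf} supplies the derivative bound, and I need $\pi \circ F_r^n$ to be $C^1$ there (which it is, as a composition of $C^1$ maps). The convexity of each fiber $\{x : \|x - X(s)\| \le \epsilon\}$ handles the first issue cleanly. A second minor point is to confirm that $\epsilon$ and $r$ can be taken small enough simultaneously for \cref{fPertContracts} and \cref{oneHalf} to hold — but both lemmas are stated for "$r$ and $\epsilon$ sufficiently small," so taking the smaller of the two thresholds suffices. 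Everything else is a direct substitution of the already-established bounds.
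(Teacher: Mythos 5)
Your proof is correct and takes essentially the same route as the paper: both reduce the estimate to a mean value inequality in the $x$-fiber, invoke the derivative bound of \cref{oneHalf}, and take a supremum over the base variable. Your explicit attention to the convexity of each fiber $\{x : \|x - X(s)\| \le \epsilon\}$ is a small refinement over the paper's ``for some $u^*$'' phrasing, but it is the same argument.
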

\begin{proof}
Choose $s \in \R$. Then
\begin{align*}
||(G_nu)(s) - (G_nu')(s)|| & = ||\pi \circ F_r^n(s-nr,u(s-nr)) - \pi \circ F_r^n(s-nr,u'(s-nr))|| \\
& = ||D_x [\pi \circ F_r^n](s-nr,u^*)(u(s-nr) - u'(s-nr))|| \text{, for some } u^*\\
& < \frac{1}{2} ||u(s-nr) - u'(s-nr)|| \text{, by \cref{oneHalf}} \\
& \le \frac{1}{2} ||u-u'||_\infty
\end{align*}
Therefore,
$$||G_nu - G_nu'||_\infty \le \frac{1}{2} ||u - u'||_\infty.$$
\end{proof}

And now we are ready to prove \cref{TheTheorem}:

\begin{proof}[Proof of \cref{TheTheorem}]
By \cref{contractionMapping}, each $G_n$ for $n \in \{N, N+1,\ldots, N(N+1)\}$ is a contraction mapping of the complete metric space $S$. Therefore, by the contraction mapping theorem, there is a unique fixed point $u_n$ of $G_n$. This implies
$$F_r^n(\graph(u_n)) = \graph(u_n).$$
In particular,
\begin{align*}
F_r^N(\graph(u_N)) & = \graph(u_N) \\
F_r^{N+1}(\graph(u_{N+1})) & = \graph(u_{N+1})
\end{align*}
so
\begin{align*}
F_r^{N(N+1)}(\graph(u_N)) = \graph(u_N) \\
F_r^{N(N+1)}(\graph(u_{N+1})) = \graph(u_{N+1})
\end{align*}
By the uniqueness of the fixed point under $F_r^{N(N+1)}$, $\graph(u_N) = \graph(u_{N+1})$, so $u_N = u_{N+1}$. Let this common function be called $u$. Then
$$\graph(u) = F_r^{N+1}(\graph(u)) = F_r(F_r^N(\graph(u))) = F_r(\graph(u)).$$
This means that
$$F_r(s,u(s)) = (s+r, u(s+r))$$
for all $s \in \R$. For $k \in \Z$, set $y_k = u(rk)$. Then,
$$F_r(rk,y_k) = F_r(rk,u(rk)) = (r(k+1),u(r(k+1))) = (r(k+1),y_{k+1}).$$
This implies that $\{(rk,y_k)\}_{k \in \Z}$ is a solution of \cref{autonomousMap} and hence that $\{y_k\}_{k \in \Z}$ is a solution of \cref{mapWithRate}. Finally, we know
$$||y_k - X(rk)|| = ||u(rk) - X(rk)|| < \epsilon$$
since $u \in \mathcal{N}_\epsilon$.
\end{proof}

Based on the discussion immediately following the statement of \cref{TheTheorem}, we can phrase this result in terms of the pullback attractor to $X_-$.

\begin{cor}\label{epsilonDistance}
For any $\epsilon > 0$, there exists an $r_0 > 0$ such that if $r \in (0,r_0)$, then the pullback attractor $\{x_n^r\}$ to $X_-$ satisfies $||x_n^r - X(rn)|| < \epsilon$ for all $n \in \Z$.
\end{cor}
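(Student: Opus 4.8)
The plan is to identify, for sufficiently small $r$, the orbit $\{y_n\}$ produced by \cref{TheTheorem} with the pullback attractor $\{x_n^r\}$ to $X_-$ from \cref{uniqueSolution}, and then simply read off the conclusion from the bound $||y_n - X(rn)|| < \epsilon$. All of the real content already lives in \cref{TheTheorem} and \cref{epsilonNeigh}; the only task is to line up the various constants.

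First I would fix $\epsilon_1 > 0$ small enough that $\epsilon_1 \le \epsilon$, that $\epsilon_1$ is small enough for \cref{TheTheorem} to apply, and that $2\epsilon_1$ is small enough for \cref{epsilonNeigh} to apply (shrinking $\epsilon_1$ further if necessary). Applying \cref{TheTheorem} with $\epsilon_1$ in place of $\epsilon$ yields an $r_0 > 0$ such that for every $r \in (0,r_0)$ there is a solution $\{y_n^r\}$ of \cref{mapWithRate} with $||y_n^r - X(rn)|| < \epsilon_1$ for all $n \in \Z$; this $r_0$ will be the one asserted in the statement. Now fix $r \in (0,r_0)$.

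The remaining step is to force $\{y_n^r\}$ and $\{x_n^r\}$ into a common small neighborhood of $X_-$ for all sufficiently negative $n$ and then invoke the uniqueness clause of \cref{epsilonNeigh}. Since $X(rn) \to X_-$ as $n \to -\infty$, there is an $N_1$ with $||X(rn) - X_-|| < \epsilon_1$ for all $n \le -N_1$, whence $||y_n^r - X_-|| < 2\epsilon_1$ for all $n \le -N_1$. Since $\{x_n^r\}$ is the pullback attractor, $\lim_{n \to -\infty} x_n^r = X_-$ by \cref{uniqueSolution}, so there is an $N_2$ with $||x_n^r - X_-|| < 2\epsilon_1$ for all $n \le -N_2$. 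Setting $N = \max\{N_1,N_2\}$ and enlarging it if needed so that $rN > S$, where $S$ is the threshold from \cref{epsilonNeigh} associated with the neighborhood scale $2\epsilon_1$, we have that both $\{x_n^r\}$ and $\{y_n^r\}$ are solutions of \cref{mapWithRate} staying within the $2\epsilon_1$-neighborhood of $X_-$ for all $n \le -N$. By the uniqueness assertion of \cref{epsilonNeigh} they agree for all $n \le -N$, and since both satisfy \cref{mapWithRate} they then agree for all $n \in \Z$. Therefore $||x_n^r - X(rn)|| = ||y_n^r - X(rn)|| < \epsilon_1 \le \epsilon$ for all $n \in \Z$.

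I do not expect a genuine obstacle, but the one point that needs care is that \cref{TheTheorem} only provides closeness to the moving path $X(rn)$, not to its limit $X_-$, so one cannot directly conclude $\lim_{n \to -\infty} y_n^r = X_-$ and thereby identify $\{y_n^r\}$ with the pullback attractor via its defining limit property. Instead the identification must be routed through the uniqueness clause of \cref{epsilonNeigh}, applied at the inflated neighborhood scale $2\epsilon_1$, which is why it is $2\epsilon_1$ (and not $\epsilon_1$) that must be taken small enough for that lemma.
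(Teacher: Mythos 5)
Your proof is correct and follows essentially the same route the paper sketches in the discussion immediately after \cref{TheTheorem}: produce the $\epsilon$-close orbit $\{y_n\}$ from \cref{TheTheorem} and identify it with the pullback attractor via the uniqueness in \cref{epsilonNeigh}. You are appropriately more careful than the paper's informal remark in noting that \cref{TheTheorem} gives closeness to $X(rn)$ rather than to $X_-$, and you correctly bridge that by working at the inflated scale $2\epsilon_1$ before invoking the uniqueness clause.
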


\newpage

\begin{ex}\end{ex}

As an example of \cref{epsilonDistance}, consider the 1-dimensional logistic map
$$x_{n+1} = f(x_n,\lambda) = \lambda x_n (1-x_n).$$
When $1 < \lambda < 3$, there is a unique stable fixed point at $x = 1 - \frac{1}{\lambda}$. By setting $\Lambda(rn) = 2 + .9 \tanh(rn)$ and plugging this in for $\lambda$ in the autonomous logistic map, we get a nonautonomous map of the form \cref{mapWithRate}. As $s$ varies over $\R$, $\Lambda(s)$ varies between $1.1$ and $2.9$, and $X(s) = 1 - \frac{1}{\Lambda(s)}$ is a stable path. \cref{LogisticMapFigs} demonstrates that the smaller $r > 0$ is, the closer the pullback attractor $\{x_n^r\}$ must be to the path.

\begin{figure}
	\centering
	\includegraphics[scale = 0.55]{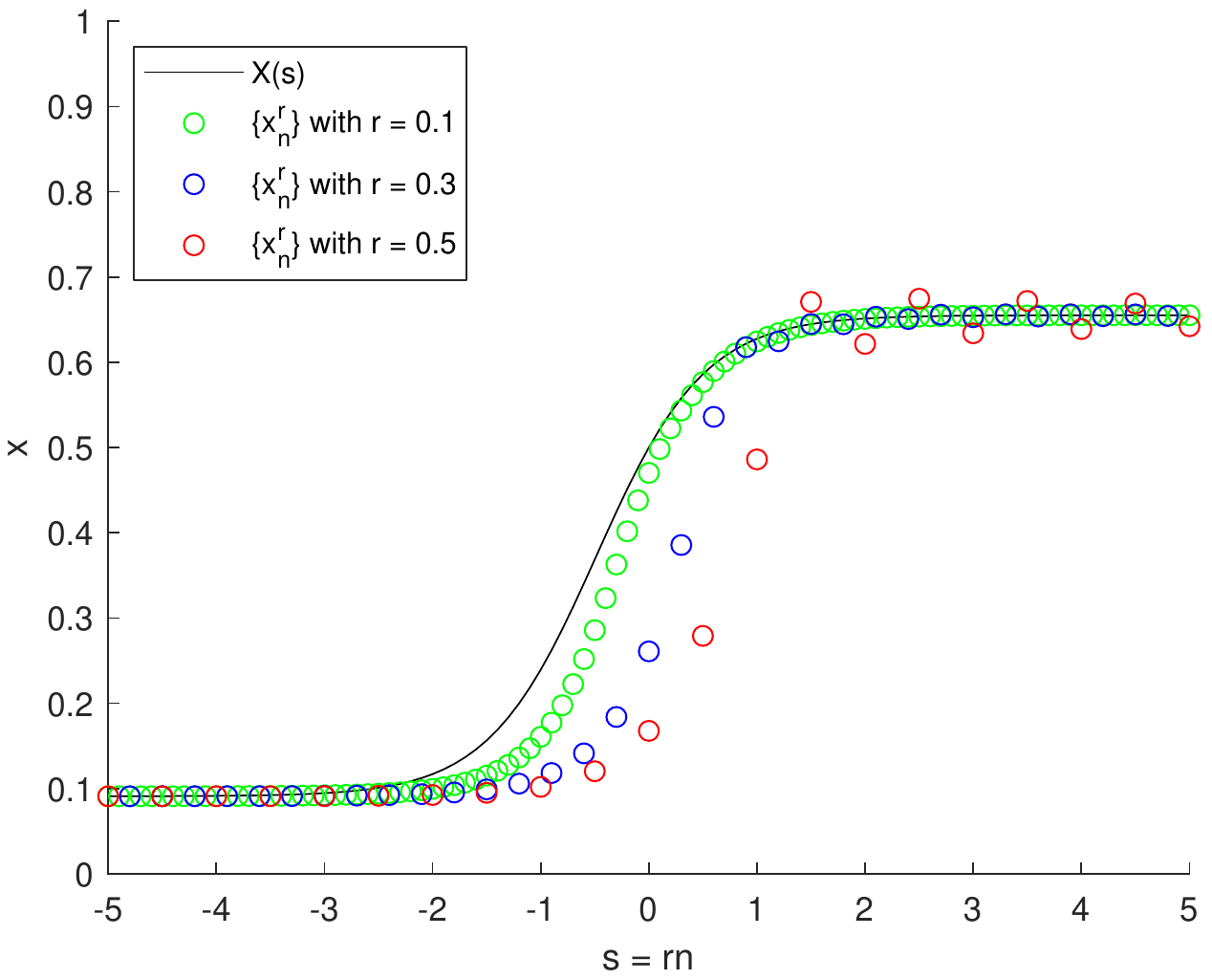}
	\caption{Three different pullback attractors to $X_-$ are plotted, with differing values of $r$. When $r$ is smaller, the pullback attractor stays closer to the path.}
	\label{LogisticMapFigs}
\end{figure}

We still have yet to show that for small $r > 0$ the pullback attractor will endpoint track the path $(s,X(s))$. Intuitively, we might expect that choosing $\epsilon > 0$ small enough and forcing the pullback attractor to stay within $\epsilon$ of the path would force endpoint tracking. That is indeed the case, and we prove it \cref{endpointTracks}. First, we need some notation and a lemma.

Let $\B(X(s),\Lambda(s))$ denote the basin of attraction for $X(s)$ in the autonomous map \cref{map} when $\lambda = \Lambda(s)$.

\begin{lemma}\label{compactK}
Let $K \subset \B(X_+,\lambda_+)$ be compact. Then there exists an $S > 0$ such that if $x_N \in K$ for $rN \ge S$, then $\lim_{n \to \infty} x_n = X_+$.
\end{lemma}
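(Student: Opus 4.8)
The plan is to split the orbit's journey into two phases. In a bounded \emph{transit phase}, the nonautonomous orbit is driven from $K$ into a small neighborhood $V$ of $X_+$ in a number of steps that is uniform over $K$; in an \emph{absorption phase}, once $s=rn$ is large enough, $V$ is forward-invariant under the nonautonomous map and every orbit in it converges to $X_+$. Every threshold I introduce will be phrased purely in terms of $s=rn$, which is exactly what the statement asks for (and makes everything automatically uniform in $r$).

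For the absorption phase I would argue as in the proof of \cref{epsilonNeigh}, but in forward time and around $X_+$. Since $\rho(A)<1$ for $A:=D_xf(X_+,\lambda_+)$, choose $c\in(\rho(A),1)$ and an adapted norm $\|\cdot\|_*$ with $\|A\|_*<c$; using the integral representation \cref{meanValue} and continuity of $D_xf$, pick $\epsilon_V,\delta_V>0$ and $c'\in(c,1)$ so that $\|f(x,\lambda)-f(X_+,\lambda)\|_*\le c'\|x-X_+\|_*$ whenever $\|x-X_+\|_*\le\epsilon_V$ and $\|\lambda-\lambda_+\|<\delta_V$. Choose $S_1>0$ so large that $s\ge S_1$ forces $\Lambda(s)\in B_{\delta_V}(\lambda_+)$ and $\|f(X_+,\Lambda(s))-X_+\|_*\le(1-c')\epsilon_V$ (possible since $\Lambda(s)\to\lambda_+$ and $f(X_+,\lambda_+)=X_+$). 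Then for $rn\ge S_1$, $\|x_n-X_+\|_*\le\epsilon_V$ implies $\|x_{n+1}-X_+\|_*\le c'\|x_n-X_+\|_*+\eta_n$ with $\eta_n:=\|f(X_+,\Lambda(rn))-X_+\|_*\to0$; hence $V:=\{\|x-X_+\|_*\le\epsilon_V\}$ is forward-invariant once $rn\ge S_1$, and $c'<1$ together with $\eta_n\to0$ forces $\limsup\|x_n-X_+\|_*\le c'\limsup\|x_n-X_+\|_*$, i.e. $x_n\to X_+$.

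For the transit phase: for each $p\in K$ the autonomous orbit $f^n(p,\lambda_+)\to X_+$, so some $n_p$ has $f^{n_p}(p,\lambda_+)\in V':=\{\|x-X_+\|_*\le\epsilon_V/2\}$; by continuity the same holds on an open $W_p\ni p$, and I extract a finite subcover $W_{p_1},\dots,W_{p_k}$ of $K$ and set $T:=\max_i n_{p_i}$. The set $\mathcal{C}:=\bigcup_{j=0}^T f^j(K,\lambda_+)$ is compact; fix a compact neighborhood $\tilde{\mathcal{C}}\supset\mathcal{C}$ on which $f(\cdot,\lambda)$ is $x$-Lipschitz with a constant $L$ uniform for $\lambda$ near $\lambda_+$, and set $\omega(\lambda):=\sup_{x\in\tilde{\mathcal{C}}}\|f(x,\lambda)-f(x,\lambda_+)\|$, which tends to $0$ as $\lambda\to\lambda_+$ by uniform continuity. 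Comparing, for $x_N\in K$, the nonautonomous iterates $x_{N+j}$ with the autonomous ones $z_j:=f^j(x_N,\lambda_+)$ via a telescoping estimate $\|x_{N+j+1}-z_{j+1}\|\le L\|x_{N+j}-z_j\|+\omega(\Lambda(r(N+j)))$ (plus a bootstrap to keep $x_{N+j}\in\tilde{\mathcal{C}}$), one gets $\|x_{N+j}-z_j\|\le(\sum_{m=0}^{T-1}L^m)\max_{0\le i<T}\omega(\Lambda(r(N+i)))$ for $0\le j\le T$. Choosing $S\ge S_1$ large enough that $rN\ge S$ makes this bound (all norms being equivalent) smaller than the $\|\cdot\|_*$-slack between $V'$ and $V$ finishes the argument: pick $i$ with $x_N\in W_{p_i}$, so $z_{n_{p_i}}\in V'$, hence $x_{N+n_{p_i}}\in V$; since $r(N+n_{p_i})\ge rN\ge S\ge S_1$, the absorption phase applies from time $N+n_{p_i}$ onward, giving $\lim_{n\to\infty}x_n=X_+$.

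The main obstacle is the bookkeeping rather than any single estimate: the thresholds must constrain only $s=rn$ (so they are uniform in $r$), the transit bound must land the orbit inside the \emph{particular} ball $V$ on which the absorption contraction was set up, and the orbit must be kept inside $\tilde{\mathcal{C}}$ during the $T$ transit steps. These force a definite order of choices ($\epsilon_V,\delta_V,c'\rightsquigarrow S_1\rightsquigarrow$ the cover of $K$ and $T\rightsquigarrow\tilde{\mathcal{C}},L,\omega\rightsquigarrow S$), and getting that order right is the only delicate point.
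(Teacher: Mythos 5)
The paper itself omits this proof, deferring to the analogous flow result (Lemma~5 of the cited Kiers--Jones paper) with the remark that ``the proof is the same.'' Your two-phase argument is the natural, correct map-side version of that deferred argument: the absorption phase is the forward-time analogue of \cref{epsilonNeigh} (adapted norm with $\|A\|_*<c$, uniform contraction near $X_+$ for $rn$ large, and the tail estimate forcing $\limsup_n\|x_n-X_+\|_*=0$), while the transit phase uses compactness of $K$ to get a uniform finite hitting time $T$ for the frozen map $f(\cdot,\lambda_+)$ together with a telescoping comparison to keep the true orbit within the $V'$-to-$V$ slack over those $T$ steps once $rN$ is large; the order of choices you flag is exactly the point that needs care and you handle it correctly.
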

A nearly identical statement is made in Lemma 5 of \cite{kiers} for flows instead of maps, but the proof is the same. We omit the details here. Finally, we can conclude

\begin{thm}\label{endpointTracks}
There exists an $r_0 > 0$ such that if $r \in (0,r_0)$, then the pullback attractor $\{x_n^r\}$ to $X_-$ will endpoint track the path $(s,X(s))$.
\end{thm}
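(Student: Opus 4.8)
The plan is to combine \cref{epsilonDistance} with \cref{compactK} by showing that for small $r$, the pullback attractor enters the basin of attraction $\B(X_+,\lambda_+)$ at a late enough time, after which \cref{compactK} forces it to converge to $X_+$.

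First I would observe that since $(s,X(s))$ is a stable path, $X_+$ is an asymptotically stable fixed point of \cref{map} with $\lambda = \lambda_+$, so its basin $\B(X_+,\lambda_+)$ is open and contains some closed ball $\overline{B_\rho(X_+)}$, which is a compact subset of the basin. Apply \cref{compactK} to this compact set $K = \overline{B_\rho(X_+)}$ to obtain the corresponding threshold $S > 0$: if $x_N \in \overline{B_\rho(X_+)}$ for some $rN \ge S$, then $\lim_{n\to\infty} x_n = X_+$.

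Next I would use \cref{epsilonDistance} with $\epsilon = \rho/2$ (shrinking $\epsilon$ further if needed so that $\epsilon$ is small enough for \cref{epsilonDistance} to apply) to get $r_0^{(1)} > 0$ such that for $r \in (0, r_0^{(1)})$ the pullback attractor satisfies $\|x_n^r - X(rn)\| < \rho/2$ for all $n \in \Z$. Since $X(s) \to X_+$ as $s \to +\infty$, there is an $S' > 0$ such that $\|X(s) - X_+\| < \rho/2$ for all $s \ge S'$; hence for $s = rn \ge S'$ we get $\|x_n^r - X_+\| < \rho$, i.e.\ $x_n^r \in \overline{B_\rho(X_+)}$. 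Now I just need a single index $N$ with $rN \ge \max\{S, S'\}$: for any fixed $r > 0$ such an $N$ exists (e.g.\ $N = \lceil \max\{S,S'\}/r \rceil$), so set $r_0 = r_0^{(1)}$. Then for $r \in (0, r_0)$, pick such an $N$; we have $x_N^r \in \overline{B_\rho(X_+)}$ and $rN \ge S$, so \cref{compactK} gives $\lim_{n\to\infty} x_n^r = X_+$, which is exactly endpoint tracking.

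I do not anticipate a serious obstacle here — the theorem is essentially a bookkeeping combination of the two preceding results. The one point requiring a little care is making sure the same $r_0$ works for the conclusion: \cref{epsilonDistance} already delivers a uniform $r_0$ for the $\epsilon$-closeness over all $n$, and the existence of the index $N$ with $rN \ge \max\{S,S'\}$ is automatic for each individual $r > 0$ (the thresholds $S, S'$ do not depend on $r$), so no further shrinking of $r_0$ is needed. It is also worth noting explicitly that $\overline{B_\rho(X_+)}$ sits inside the basin, which is where asymptotic stability of $X_+$ (part of the definition of a stable path at $s = +\infty$) is used.
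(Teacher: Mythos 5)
Your proposal is correct and follows essentially the same route as the paper: pick a compact ball around $X_+$ inside $\B(X_+,\lambda_+)$, use \cref{epsilonDistance} to get $r_0$ so the pullback attractor enters that ball at late enough times, and invoke \cref{compactK} to conclude convergence to $X_+$. The paper's version is just a more compressed statement of the same argument, while you spell out the choice of the intermediate index $N$ with $rN \ge \max\{S,S'\}$ more explicitly.
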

\begin{proof}
There is some $\epsilon > 0$ such that $\overline{B_\epsilon(X_+)} \subset \B(X_+,\lambda_+)$. By \cref{epsilonDistance}, there is an $r_0 > 0$ such that for all $r \in (0,r_0)$, $x_n^r \in \overline{B_\epsilon(X_+)}$ for sufficiently large $n$. By \cref{compactK}, if $x_n^r \in \overline{B_\epsilon(X_+)}$ for large $n$, then $\lim_{n \to \infty} x_n^r = X_+$. Therefore, if $r \in (0,r_0)$, $\lim_{n \to \infty} x_n^r = X_+$.
\end{proof}

Finally, we finish this section with a proof of \cref{pullbackAttr}, which states that what we have been calling the pullback attractor is indeed a local pullback attractor:

\begin{proof}[Proof of \cref{pullbackAttr}]
Take $U = B_\epsilon(X_-)$, where $\epsilon > 0$ is the same as in \cref{contractionMapping}. Let $y \in U$. Pick an element $u \in S$ satisfying $u(s) = y$ for all sufficiently small $s$. Let $v \in S$ denote the unique fixed point under the graph transform $G$ guaranteed by the proof of \cref{TheTheorem} so $v(n) = x_n$ for all $n \in \Z$. We know $G$ acts as a contraction mapping on $S$, so
$$||G_n u - v||_\infty \to 0$$
as $n \to \infty$. Fix any $n_1 \in \Z$, and let $\delta > 0$. Then there exists an $N \in \N$ such that for all $n \ge N$, $||G_n u - v||_\infty < \delta$. Making $N$ larger if necessary, we can also guarantee that $u(n_1 - n) = y$ for all $n \ge N$. Then in particular, 
$$||G_nu(n_1) - v(n_1)|| = ||\phi(n_1,n_1 - n, y) - x_{n_1}|| < \delta$$
for all $n \ge N$. Therefore, $\phi(n_1,n_0,y) \to x_{n_1}$ as $n_0 \to -\infty$.
\end{proof}

\section{Definition and Conditions for Rate-induced Tipping in Maps}\label{conditions}

Now that we have proven some preliminary results in \cref{sec:UniquePullbackAttractor,fenichel}, we are ready to define rate-induced tipping. As before, suppose \cref{mapWithRate} has a stable path $(s,X(s))$ with $X_\pm = \lim_{s \to \pm \infty} X(s)$. \cref{endpointTracks} guarantees that as long as $r > 0$ is sufficiently small, the pullback attractor $\{x_n^r\}$ to $X_-$ endpoint tracks the path; that is, that $\lim_{n \to \infty} x_n^r = X_+$. However, for some maps and paths there may be a rate $r_0 > 0$ for which the pullback attractor does {\em not} endpoint track the path, or $\lim_{n \to \infty} x_n^{r_0} \ne X_+$ or does not exist. This will be our definition for rate-induced tipping in a map.

\begin{defn}
If there is some $r_0 > 0$ for which the pullback attractor to $X_-$ does not endpoint track the path $(s,X(s))$, then {\em rate-induced tipping} has occurred.
\end{defn}

\begin{ex}\end{ex}

To demonstrate that rate-induced tipping can happen in maps, consider
\begin{align*}
f(x,\lambda) & = \frac{2(x-\lambda)}{(1+(x-\lambda)^2)^2} + \lambda \\
\Lambda(s) & = \tanh(s).
\end{align*}
Then there are two stable paths, $X(s) = \tanh(s) + \sqrt{-1 + \sqrt{2}}$ and $Z(s) = \tanh(s) - \sqrt{-1 + \sqrt{2}}$, as well as an unstable path, $Y(s) = \tanh(s)$. The limiting values are $X_\pm = \pm 1 + \sqrt{-1 + \sqrt{2}}$, $Y_\pm = \pm 1$, and $Z_\pm = \pm 1 - \sqrt{-1 + \sqrt{2}}$, corresponding to $\lambda_\pm = \pm 1$. In \cref{RTippingDemo} we plot these paths along with the pullback attractor $\{x_n^r\}$ to $X_-$ for two different values of $r>0$. When $r$ is small, the pullback attractor tracks $(s,X(s))$, but when $r$ is large, it does not, and there is rate-induced tipping.

\begin{figure}
	\centering
	\begin{subfigure}{0.4 \textwidth}
		\centering
		\includegraphics[width = \textwidth]{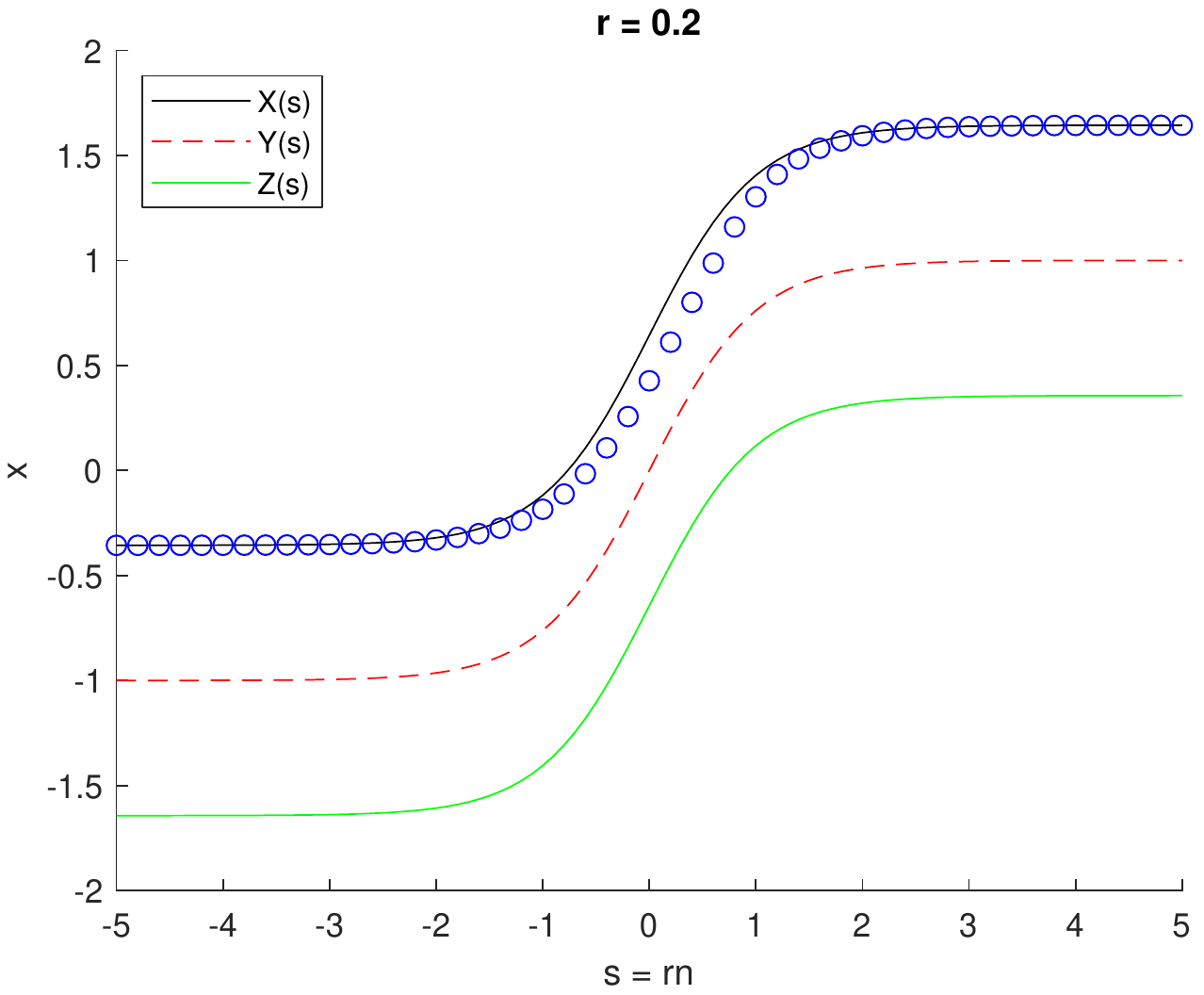}
	\end{subfigure}
	\hspace{0.5in}
	\begin{subfigure}{0.4 \textwidth}
		\centering
		\includegraphics[width = \textwidth]{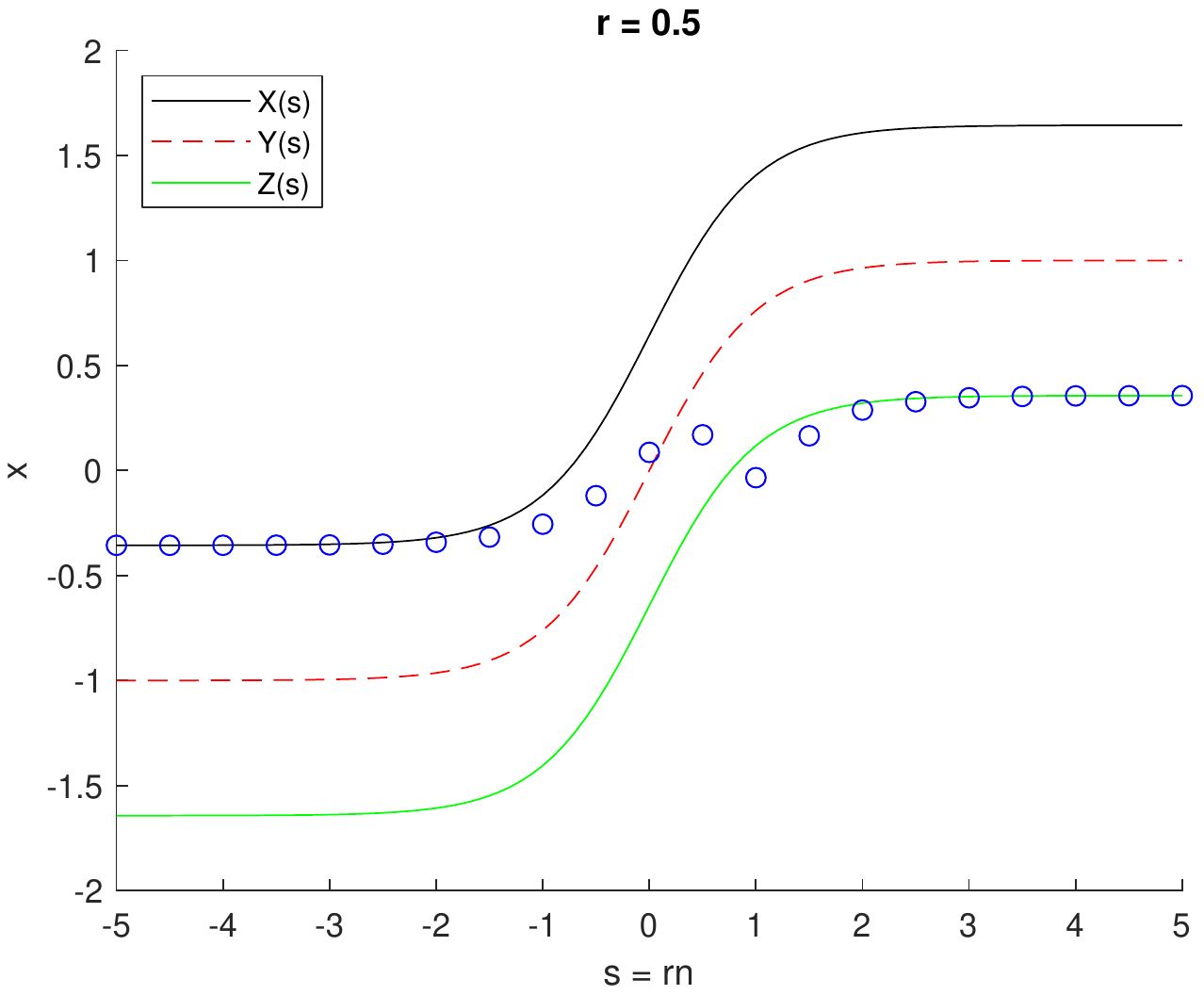}
	\end{subfigure}
	\caption{When $r = 0.2$, the pullback attractor to $X_-$ endpoint tracks the path $(s,X(s))$, but when $r = 0.5$, the pullback attractor does not endpoint track, showing that there is rate-induced tipping.}
	\label{RTippingDemo}
\end{figure}

The rest of this section will be devoted to exploring which kinds of parameter shifts lead to rate-induced tipping and which don't. The same thing is done for flows in \cite{sebas} and \cite{kiers}, and we will compare and contrast these results with R-tipping results for maps. As we will see, some of the R-tipping results are the same between continuous and discrete dynamical systems, but some are quite different.

We begin with the notion of {\em forward basin stability}, introduced in \cite{sebas}. We say a stable path $(s,X(s))$ is {\em forward basin stable} if
$$\overline{\{X(s_1): s_1 < s_2\}} \subset \B(X(s_2),\Lambda(s_2))$$
for all $s_2 \in \R$.

It is shown in \cite{sebas} that forward basin stability prevents R-tipping in 1-dimensional flows, but from \cite{kiers} we know it does not necessarily prevent R-tipping in flows of higher dimensions. As the following example shows, forward basin stability does not prevent R-tipping even in maps of one dimension.

\ignore{Intuitively, the reason for this is that in maps of any dimension (as in flows of dimension higher than 1), a point $x$ may be in the basin of attraction of some other point $p$, but that does not mean that $\phi(n,n_0,x)$ moves in the direction of $p$ or even gets strictly closer to $p$ as $n$ increases. We use this fact to our advantage to construct the following example.}

\begin{ex}\label{FBSnotEnough}\end{ex}

In the nonautonomous map
$$f(x,\lambda) = .25((x-\lambda)^2 - (x-\lambda)) + \lambda$$
$$\Lambda(rn) = 3 \sech((rn)^{10})$$
there is a unique stable path $(s,X(s))$ with $X(s) = \Lambda(s)$ and a unique unstable path $(s,Y(s))$ with $Y(s) = 5 + \Lambda(s)$. For any $s \in \R$, $\B(X(s),\Lambda(s)) = (-4 + \Lambda(s), Y(s))$. With the given parameter shift $\Lambda(s)$, $(s,X(s))$ is forward basin stable. However, the pullback attractor $\{x_n^r\}$ to $X_- = 0$ does not endpoint track the path for $r = 2$ (see \cref{fig:FBS}).

\begin{figure}
\centering
\includegraphics[scale = 0.55]{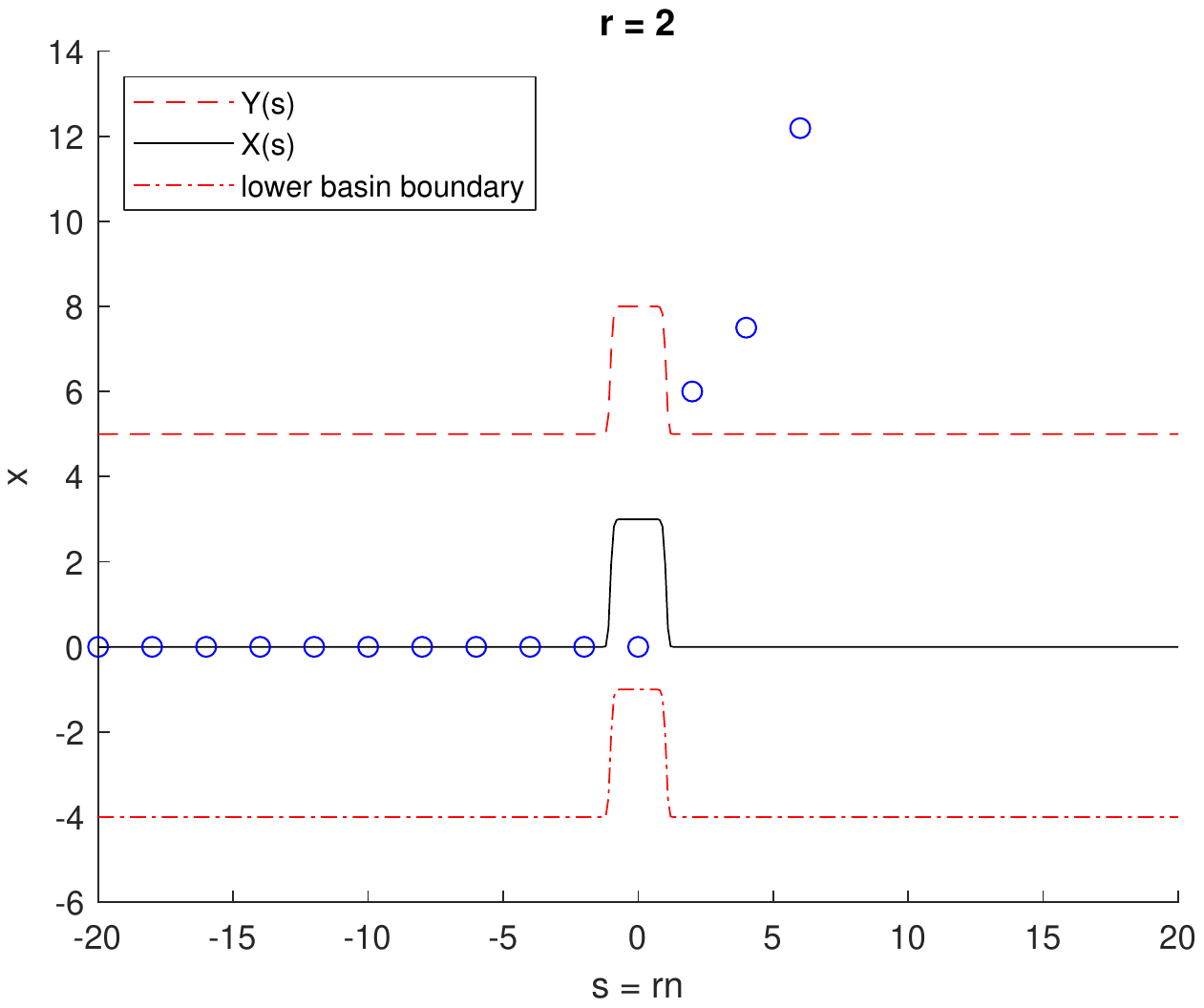}
\caption{The two red dashed curves together form the boundary of the basin of attraction for the stable path $(s,X(s))$. Even though $(s,X(s))$ is forward basin stable, the pullback attractor to $X_-$ does not endpoint track the path when $r = 2$.}
\label{fig:FBS}
\end{figure}

In flows, if $X_- \subset \B(Y_+,\lambda_+)$ for an attracting equilibrium $Y_+$, then the pullback attractor to $X_-$ will converge to $Y_+$ for all sufficiently large $r > 0$ (see Theorem 2 of \cite{kiers}). However, \cref{FBSnotEnough} shows that this is not the case in maps. We have $X_- \subset \B(X_+,\lambda_+)$ (in fact, $X_- = X_+ = 0$), but for all large values of $r$, the pullback attractor to $X_-$ does not endpoint track. If we let $\{x_n^\infty\}$ denote the pointwise limit of the $\{x_n^r\}$ as $r \to \infty$, then $x_{-1}^\infty = 0$, $x_0^\infty = 0$, $x_1^\infty = 6$, $x_2^\infty = 7.5$, etc. which diverges to infinity. The next Theorem gives a general result about what happens to the pullback attractor for large $r$ in maps.

\begin{thm}\label{largeR}
Let $Z_1 = f(X_-,\Lambda(0))$ and $Z_2 = f(Z_1,\lambda_+)$, and let $\{x_n^r\}$ be the pullback attractor to $X_-$. If $Z_2 \in \mathbb{B}(Y_+,\lambda_+)$ for some attracting fixed point $Y_+$, then $x_n^r \to Y_+$ as $n \to \infty$ for all sufficiently large $r>0$.
\end{thm}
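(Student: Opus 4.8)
The plan is to show that for large $r$, the pullback attractor $\{x_n^r\}$ is forced to pass through a small neighborhood of $Z_2$ at some fixed finite time index, and then to invoke \cref{compactK} to conclude it converges to $Y_+$. Since $\{x_n^r\}$ is the pullback attractor to $X_-$, it satisfies $\lim_{n \to -\infty} x_n^r = X_-$, and in particular for every $\delta > 0$ there is an index at which it is within $\delta$ of $X_-$. First I would argue that when $r$ is large, $\Lambda(rn)$ is extremely close to $\lambda_-$ for all $n < 0$ (because $rn \to -\infty$ uniformly on $\{n \le -1\}$ as $r \to \infty$, using \eqref{paramShift}), so the recursion $x_{n+1}^r = f(x_n^r, \Lambda(rn))$ behaves like iteration of the \emph{autonomous} map at $\lambda_-$, for which $X_-$ is an attracting fixed point. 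Hence for any $\delta > 0$, by choosing $r$ large enough I can guarantee $\|x_0^r - X_-\| < \delta$. (This is the discrete analog of the fact that for large $r$ the solution ``sits at $X_-$'' until the parameter jumps; it requires a short argument using the contraction estimate near $X_-$ from the proof of \cref{epsilonNeigh}, plus the uniform convergence $\Lambda(rn) \to \lambda_-$.)

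Next I would track the orbit forward one step at a time. Since $x_0^r$ is within $\delta$ of $X_-$ and $\Lambda(0) = \Lambda(r \cdot 0)$ is fixed, continuity of $f$ gives $x_1^r = f(x_0^r, \Lambda(0))$ within $O(\delta)$ of $f(X_-, \Lambda(0)) = Z_1$. For the second step, $x_2^r = f(x_1^r, \Lambda(r))$: here I use that $\Lambda(r) \to \lambda_+$ as $r \to \infty$ (by \eqref{paramShift}), so $x_2^r \to f(Z_1, \lambda_+) = Z_2$ as $r \to \infty$. More precisely, given any $\eta > 0$, by first choosing $\delta$ small and then $r$ large I can force $\|x_2^r - Z_2\| < \eta$. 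Now since $Z_2 \in \B(Y_+, \lambda_+)$, which is open, I can choose $\eta$ small enough that $\overline{B_\eta(Z_2)} \subset \B(Y_+, \lambda_+)$, and this closed ball is compact. Apply \cref{compactK} with $K = \overline{B_\eta(Z_2)}$: it yields an $S > 0$ such that if $x_N^r \in K$ with $rN \ge S$, then $\lim_{n \to \infty} x_n^r = X_+ = Y_+$. Taking $N = 2$, we need $2r \ge S$, i.e. $r \ge S/2$, which holds for all sufficiently large $r$; and we have arranged $x_2^r \in K$. Hence $x_n^r \to Y_+$ as $n \to \infty$.

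The one subtlety in ordering the quantifiers: \cref{compactK} fixes $S$ only after $K$ (hence $\eta$) is fixed, and $\eta$ in turn must be fixed before choosing $\delta$ and $r$; meanwhile making $\|x_2^r - Z_2\| < \eta$ also requires $r$ large. So the chain is: fix $Y_+$ and $Z_2$; pick $\eta$ with $\overline{B_\eta(Z_2)} \subset \B(Y_+,\lambda_+)$; get $S$ from \cref{compactK}; then pick $\delta$ and a threshold $r^* \ge S/2$ large enough that $r > r^*$ forces both $\|x_0^r - X_-\| < \delta$ and (via the two forward steps) $\|x_2^r - Z_2\| < \eta$. For all $r > r^*$ the conclusion follows.

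I expect the main obstacle to be the first step: rigorously showing $\|x_0^r - X_-\| \to 0$ as $r \to \infty$. Unlike a flow, the pullback attractor here is defined only as the unique orbit limiting to $X_-$ backward in time, so I cannot simply say ``it equals $X_-$ for $n \le 0$.'' I would handle this by revisiting the fixed-point construction in \cref{epsilonNeigh}: on the tail $n \le -1$ the map $\phi$ is a contraction with ratio $c$ toward the fixed point of $f(\cdot, \Lambda(rn))$, and since $\|f(X_-, \Lambda(rn))\| \to 0$ (taking $X_- = 0$) uniformly in $n \le -1$ as $r \to \infty$, the unique fixed point sequence is uniformly close to $X_-$ on $n \le -1$; one more application of $f$ with the continuous dependence on $\lambda$ then controls $x_0^r$. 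Everything after that first step is routine continuity and an invocation of \cref{compactK}.
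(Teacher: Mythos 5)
Your proposal is correct and follows essentially the same route as the paper's proof: use \cref{epsilonNeigh} to pin the pullback attractor near $X_-$ at a negative time index, push the orbit forward two steps to land near $Z_1$ and then $Z_2$ (using $\Lambda(-r)\to\lambda_-$, $\Lambda(0)$ fixed, and $\Lambda(r)\to\lambda_+$), and finish by applying \cref{compactK} to a compact neighborhood of $Z_2$ inside $\B(Y_+,\lambda_+)$. Your remarks on quantifier ordering are a nice explicit bookkeeping of what the paper leaves implicit, and the small slip writing ``$X_+ = Y_+$'' (you mean \cref{compactK} applied with $Y_+$ in the role of the attractor) does not affect the argument.
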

\begin{proof}
By Lemma \ref{epsilonNeigh}, there exists an $r_1 > 0$ such that $|x_{-1}^{r} - X_-| < \epsilon_1$ for all $r \ge r_1$. If $\epsilon_1$ is small enough and $r$ is made larger, if necessary, then
$$|x_0^r - X_-| \le |f(x_{-1}^r, \Lambda(-r)) - f(X_-,\Lambda(-r))| + |f(X_-,\Lambda(-r))-f(X_-,\lambda_-)| < \epsilon_2$$
can be made as small as desired. By making $\epsilon_2$ small and $r$ larger if necessary,
$$|x_1^r - Z_1| = |f(x_0^r,\Lambda(0)) - f(X_-,\Lambda(0))| < \epsilon_3$$
can be made as small as desired. By making $\epsilon_3$ small and $r$ larger if necessary, we can ensure that
$$|x_2^r - Z_2| \le |f(x_1^r,\Lambda(r))- f(x_1^r,\lambda_+)| + |f(x_1^r,\lambda_+) - f(Z_1,\lambda_+)| < \epsilon_4$$
can be made small. For small $\epsilon_4$ and large $r$, \cref{compactK} implies that $x_n^r \to Y_+$ as $n \to \infty$.
\end{proof}

In the context of \cref{FBSnotEnough}, $X_- = 0$, $Z_1 = 6$, and $Z_2 = 7.5$. Loosely, we can think of $7.5$ as being in the basin of attraction of infinity when $\lambda= \lambda_+$, and indeed we find that $x_n^r$ diverges to infinity as $n \to \infty$ for all large $r$.

The above examples and discussion demonstrate some ways in which conditions for R-tipping in maps are different from those for flows (specifically, forward basin stability does not prevent R-tipping, and the behavior of the pullback attractor as $r \to \infty$ is different). However, some results between discrete and continuous dynamical systems are the same. We conclude this section with two results: one (\cref{canRTip}) giving conditions for when R-tipping is guaranteed to happen in maps, and one (\cref{inflowing}) in which R-tipping cannot happen. The corresponding results for flows are given in Theorem 2 and Proposition 1 of \cite{kiers}, respectively.

\begin{thm}\label{canRTip}
Suppose $(s,X(s))$ and $(s,Y(s))$ are two distinct stable paths, where $X(s)$ exists for all $s \in \R$ and $Y(s)$ for all sufficiently large $s$. If $X(u) \in \mathbb{B}(Y(v),\Lambda(v))$ for some $u < v$, then $(s,X(s))$ is not forward basin stable and there is a parameter shift $\tilde \Lambda$ such that there is R-tipping away from $X_-$ to $Y_+$ for some $r>0$.
\end{thm}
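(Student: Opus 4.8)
The statement has two parts. The failure of forward basin stability I would get at $s_2=v$. Since $X$ and $Y$ are distinct stable paths, they agree at no common point: along a stable path $1$ is never an eigenvalue of $D_xf$, so the implicit function theorem gives a unique local continuation of equilibria, and hence $\{s:X(s)=Y(s)\}$ is both open and closed in the connected interval where both are defined; as it is not all of that interval, it is empty. In particular $X(v)\neq Y(v)$. Now $X(u)\in\B(Y(v),\Lambda(v))$ says that the orbit of $X(u)$ under the autonomous map $x\mapsto f(x,\Lambda(v))$ converges to $Y(v)$, so it cannot also converge to the distinct equilibrium $X(v)$; thus $X(u)\notin\B(X(v),\Lambda(v))$. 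Since $u<v$ gives $X(u)\in\overline{\{X(s_1):s_1<v\}}$, the inclusion in the definition of forward basin stability fails at $s_2=v$. The same continuation argument at $s=+\infty$ yields $X_+\neq Y_+$, which I will use at the end.

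For the R-tipping part the plan is a separation-of-timescales construction. I would assemble $\tilde\Lambda$ from five pieces: (i) a reparametrization of $\Lambda|_{(-\infty,u]}$, so that near $-\infty$ the parameter, the path $X$, and the limit $X_-$ are unchanged; (ii) which ends at $\tilde\Lambda(u_0)=\Lambda(u)$ for a point $u_0$ that, after a harmless translation of $\tilde\Lambda$ once $r$ has been fixed, lies in $r\Z$; (iii) a single short interval $[u_0,u_0+h]$ on which $\tilde\Lambda$ runs from $\Lambda(u)$ to $\Lambda(v)$ along the reparametrized curve $\Lambda|_{[u,v]}$; (iv) a long plateau at the constant value $\Lambda(v)$; and (v) a reparametrization of $\Lambda|_{[v,\infty)}$ carrying the parameter from $\Lambda(v)$ to $\lambda_+$. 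On each piece the parameter either follows a reparametrized arc of $\Lambda$---along which $X$, and, where it exists, $Y$, is a stable fixed point, because $(s,X(s))$ is a stable path on all of $\R$ and $v$ is large enough that $Y$ exists there---or is constant at the attracting equilibrium $\Lambda(u)$ or $\Lambda(v)$. Hence $(s,X(s))$ is a stable path for $\tilde\Lambda$ on all of $\R$ with limits $X_\pm$, and $(s,Y(s))$ is a stable path for $\tilde\Lambda$ for all sufficiently large $s$ with limit $Y_+$. (If one need not preserve $X_-$ and $Y_+$ exactly, pieces (i) and (v) may be deleted and the last phase below becomes trivial.)

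The dynamical argument then goes phase by phase. Fix $\epsilon_1>0$ with $\overline{B_{\epsilon_1}(X(u))}\subset\B(Y(v),\Lambda(v))$ and a small $\epsilon>0$. Let $\hat\Lambda$ be the companion shift built from pieces (i)--(ii) and then held constant at $\Lambda(u)$; it is a genuine parameter shift, so \cref{epsilonDistance} furnishes an $r_0>0$---independent of $h$ and of the plateau lengths---such that for all $r\in(0,r_0)$ the pullback attractor of $\hat\Lambda$ is within $\epsilon_2$ of $X(u)$ at time $u_0/r$, where $\epsilon_2\in(0,\epsilon_1)$ is as small as we wish. Since $\tilde\Lambda$ and $\hat\Lambda$ coincide on $(-\infty,u_0]$ and the pullback attractor is determined by its backward behaviour alone (\cref{epsilonNeigh,uniqueSolution}), the pullback attractor $\{x_n^r\}$ of $\tilde\Lambda$ coincides with that of $\hat\Lambda$ for all $n\le u_0/r$; in particular $\|x_{u_0/r}^r-X(u)\|<\epsilon_2$. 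Now fix $r\in(0,r_0)$, also small enough for the last phase, and only then pick $h<r$ and make all plateau lengths large. With $h<r$ the jump (iii) is completed within a single map step, so $x_{u_0/r+1}^r=f(x_{u_0/r}^r,\Lambda(u))\in B_{\epsilon_1}(X(u))\subset\B(Y(v),\Lambda(v))$ while the parameter has already switched to $\Lambda(v)$. On the long plateau (iv) the map is autonomous with attracting fixed point $Y(v)$, so the orbit is brought within $\epsilon$ of $Y(v)$. Finally, on piece (v) the orbit lies in a thin neighbourhood $\mathcal{N}_\epsilon$ of the stable path $Y$, and the forward invariance of $\mathcal{N}_\epsilon$ for small $r$ (the argument of \cref{fPertContracts}, now for the half-line path $Y|_{[v,\infty)}$) keeps it $\epsilon$-close to $Y$ thereafter; hence $x_n^r\in\overline{B_\epsilon(Y_+)}\subset\B(Y_+,\lambda_+)$ for $n$ large, and \cref{compactK} applied to $Y$, $Y_+$, $\lambda_+$ gives $\lim_{n\to\infty}x_n^r=Y_+$. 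As $Y_+\neq X_+$, the pullback attractor does not endpoint track $(s,X(s))$, so R-tipping away from $X_-$ to $Y_+$ has occurred; together with the first paragraph this proves the theorem.

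The main difficulty I anticipate is balancing the two opposing constraints on $r$: the jump must be instantaneous on the scale of a single step ($h<r$), yet $r$ must be small enough for quasi-static tracking in the first and last phases. This is what forces the order of quantifiers used above---$r_0$ is read off from the jump-free companion $\hat\Lambda$ and from the fixed shape of piece (v), hence does not degrade as the jump is sharpened---but verifying it cleanly requires care with the order in which $\epsilon_1$, $\epsilon_2$, $\epsilon$, the plateau lengths, and the translation placing $u_0$ in $r\Z$ are chosen. A secondary point is that \cref{fPertContracts} and \cref{compactK} were stated for stable paths on all of $\R$, while $Y$ here is only a half-line stable path and the orbit enters its neighbourhood at a finite time rather than as a pullback attractor from $-\infty$; these adaptations are routine---compactness of $[v,+\infty]$ is all the Uniformity Lemma needs---but should be recorded.
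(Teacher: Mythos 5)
Your construction is the same in spirit as the paper's: reparametrize $\Lambda$ so that the pullback attractor tracks $X$ up to $X(u)$, then compress the arc of $\Lambda$ from $\Lambda(u)$ to $\Lambda(v)$ into a single map step, landing the attractor in $\B(Y(v),\Lambda(v))$, after which it tracks $Y$ for small $r$ and converges to $Y_+$. The paper's reparametrization $\rho$ is simply $\sigma(s)=s+u$ for $s\le 0$ and $\tau(s)-r=s+v-r$ for $s\ge r$, joined $C^1$ on $(0,r)$; it dispenses with your intermediate plateau at $\Lambda(v)$ (your piece (iv)) by invoking a forward-tracking analogue of \cref{epsilonDistance} directly for the shifted parameter $\Lambda\circ\tau$ applied to orbits starting in $K_1=f(\overline{B_\epsilon(X(u))},\Lambda(u))\subset\B(Y(v),\Lambda(v))$---essentially the same ``routine adaptation'' of \cref{fPertContracts} and \cref{compactK} to a half-line stable path that you flag at the end. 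Your care with the order of quantifiers ($r$ read off from a jump-free companion shift before $h$ and the plateau lengths are chosen) is exactly the right concern; the paper handles it by fixing $r<\min\{r_0,r_1,v-u\}$ before defining $\rho$, so that $r$ is built into the construction from the start, and by choosing $\epsilon$ so that the single-step image $K_1$ already lands in the target basin. The one genuine addition in your proposal is the argument that $(s,X(s))$ fails forward basin stability: since stable paths have no eigenvalue equal to $1$, the implicit function theorem shows distinct stable paths never coincide, so $X(v)\ne Y(v)$, and $X(u)\in\B(Y(v),\Lambda(v))$ then forces $X(u)\notin\B(X(v),\Lambda(v))$, violating the defining inclusion at $s_2=v$. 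The paper asserts this conclusion in the theorem statement but supplies no proof for it, so your paragraph fills a real, if small, gap.
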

\begin{proof}
We will construct a parameter shift $\tilde \Lambda$ that gives R-tipping away from $X_-$. Since solutions to maps are only evaluated at integer multiples of $r$ and since $r$ is not yet determined, we introduce the functions
\begin{align*}
\sigma(s) & = s+u \\
\tau(s) & = s+v
\end{align*}
which will enable us to make sure the pullback attractor we introduce will be evaluated at both $u$ and $v$. Notice $\lim_{s \to \pm \infty} X(\sigma(s)), X(\tau(s)) = X_\pm$, $\lim_{s \to \pm \infty} Y(\sigma(s)), Y(\tau(s)) = Y_\pm$.

Now, let $x_n^{r,\sigma}$ denote the pullback attractor to $X_-$ under the parameter shift $\Lambda \circ \sigma$. Then
$$x_{n+1}^{r,\sigma} = f(x_n^{r,\sigma},\Lambda (\sigma(rn))).$$
Since $X(u) \in \B(Y(v),\Lambda(v))$, we can pick $\epsilon > 0$ such that $K = \overline{B_\epsilon(X(u))}$ satisfies $f(K,\Lambda(u)) = K_1 \subset \mathbb{B}(Y(v),\Lambda(v))$. Then by \cref{epsilonDistance} there is an $r_0 > 0$ such that for all $r \in (0,r_0)$, $|x_n^{r,\sigma} - X(\sigma(rn))| < \epsilon/2$ for all $n \in \Z$. By similar reasoning, we can find an $r_1 > 0$ such that for all $r \in (0,r_1)$, if $\{y_n\}$ is an orbit under the map $y_{n+1} = f(y_n,\Lambda(\tau(rn)))$ with $y_0 \in K_1$, then $y_n \to Y_+$ as $n \to \infty$. Now fix $r \in (0,\min\{r_0,r_1,v-u\})$. \par

We will construct a reparametrization
$$\tilde \Lambda(s) := \Lambda(\rho(s))$$
using a monotonic increasing function $\rho \in C^1(\R,\R)$ that increases rapidly from $\rho(s) = u$ to $\rho(s) = v$ but increases slowly otherwise. Define
$$\rho(s) = \left\{
\begin{array}{ll}
\sigma(s) & \text{ for } s \le 0 \\
\tau(s)-r & \text{ for } s \ge r
\end{array}
\right.$$
and then continue $\rho$ on $(0,r)$ in a $C^1$ way. \par

Let $\tilde x_n^r$ be the pullback attractor to $X_-$ with parameter shift $\tilde \Lambda$. By construction, we know that $\tilde x_0^r = x_0^{r, \sigma}$, so $|\tilde x_0^r - X(u)| < \epsilon/2$. Hence, $\tilde x_0^r \in K$. By our choice of $K$, we know
$$\tilde x_1^r = f(\tilde x_0^r,\tilde \Lambda(0)) = f(\tilde x_0^r, \Lambda(u)) \in K_1.$$
If we set $y_n = \tilde x_{n+1}^r$, then $y_0 \in K_1$ and $\{y_n\}_{n = 0}^\infty$ is a forward orbit under the map $y_{n+1} = f(y_n,\Lambda(\tau(rn)))$. Therefore, because $r < r_1$, $\tilde x_n^r \to Y_+$ as $n \to \infty$.
\end{proof}

The Ikeda map example in \cref{section:ikeda} demonstrates an application of \cref{canRTip}.

The idea of a {\em forward inflowing stable path} was introduced in \cite{kiers}, and it was proved that forward inflowing stability prevents R-tipping in flows of any dimension. Likewise, here we will show that there can be no R-tipping away from a forward inflowing stable path in a map. A stable path $(s,X(s))$ is {\em forward inflowing stable} if for each $s \in \R$ there exist compact sets $K(s)$ satisfying
\begin{enumerate}
\item if $s_1 < s_2$, then $K(s_1) \subset K(s_2)$;
\item $f(K(s),\Lambda(s)) \subset K(s)$ for all $s \in \R$;
\item $X_\pm \in \Int K_\pm$ where $K_- = \bigcap_{s \in \R} K(s)$ and $K_+ = \overline{\bigcup_{s\in \R} K(s)}$; and
\item $K_+ \subset \B(X_+,\lambda_+)$ is compact.
\end{enumerate}

\begin{thm}\label{inflowing}
Suppose $(s,X(s))$ is a forward inflowing stable path. Then there will not be R-tipping away from $X_-$.
\end{thm}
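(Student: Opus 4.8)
The plan is to show that if $(s,X(s))$ is forward inflowing stable, then the pullback attractor $\{x_n^r\}$ to $X_-$ must endpoint track the path for \emph{every} $r>0$, so no R-tipping can occur. The key mechanism is that the nested compact sets $K(s)$ trap the pullback attractor: since $\lim_{n\to-\infty} x_n^r = X_-$ and $X_- \in \Int K_-$ with $K_- = \bigcap_s K(s)$, for sufficiently negative $n$ we have $x_n^r \in K_-$, hence $x_n^r \in K(s)$ for that negative time level and indeed $x_n^r \in K(rn)$. Then I would propagate this forward: by property (2), $f(K(s),\Lambda(s)) \subset K(s)$, and combined with the monotonicity property (1), if $x_n^r \in K(rn)$ then $x_{n+1}^r = f(x_n^r,\Lambda(rn)) \in K(rn) \subset K(r(n+1))$ when $r>0$. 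So by induction $x_n^r \in K(rn) \subset K_+$ for all $n$ large enough (in fact all $n$, once we are past the negative-time range where $x_n^r \in K_-$).

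Once $\{x_n^r\}$ is eventually confined to the compact set $K_+$, I would finish with \cref{compactK}. Since $K_+ \subset \B(X_+,\lambda_+)$ is compact by property (4), \cref{compactK} gives an $S>0$ such that any orbit of \cref{mapWithRate} landing in $K_+$ at a time index $N$ with $rN \ge S$ must satisfy $\lim_{n\to\infty} x_n = X_+$. Because $x_n^r \in K_+$ for all sufficiently large $n$ (in particular for some $n=N$ with $rN \ge S$), we conclude $\lim_{n\to\infty} x_n^r = X_+$, i.e., the pullback attractor endpoint tracks the path. As $r>0$ was arbitrary, there is no R-tipping away from $X_-$.

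The one point requiring a little care — and the main obstacle — is the very first step: justifying that the pullback attractor actually enters $K_-$ in backward time, and more precisely that $x_n^r \in K(rn)$ for all sufficiently negative $n$. From \cref{uniqueSolution} we know $x_n^r \to X_-$ as $n\to-\infty$, and $X_- \in \Int K_-$, so $x_n^r \in K_- = \bigcap_{s\in\R} K(s)$ for all $n \le -N_0$ for some $N_0$; in particular $x_n^r \in K(rn)$ for those $n$. Then the forward induction using (1) and (2) takes over. I should also note that property (3)'s requirement $X_+ \in \Int K_+$ is not strictly needed for this argument (only (1), (2), (4) and $X_-\in\Int K_-$ are used), which mirrors the situation for flows in Proposition 1 of \cite{kiers}; I would remark on this but it does not affect the proof.
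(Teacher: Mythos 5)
Your proof is correct and follows essentially the same approach as the paper's: trap the pullback attractor in $K_-$ using the backward convergence to $X_-$, propagate forward via properties (1) and (2) to get $x_n^r \in K(rn) \subset K_+$ for all $n$, and then apply \cref{compactK} with the compact set $K_+ \subset \B(X_+,\lambda_+)$. The paper invokes \cref{epsilonNeigh} directly rather than \cref{uniqueSolution} to place $x_n^r$ inside $K_-$ for sufficiently negative $n$, but this is an immaterial difference; your side remark that $X_+ \in \Int K_+$ is not used in the argument is also accurate.
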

\begin{proof}
Fix $r > 0$ and let $\{x_n^r\}$ denote the pullback attractor to $X_-$. Let $\{K(s)\}$ be the sets guaranteed by forward inflowing stability. There exists an $\epsilon > 0$ such that $B_\epsilon(X_-) \subset K_-$, and by \cref{epsilonNeigh} there is some $N > 0$ such that $x_n^r \in B_\epsilon(X_-)$ for all $n \le -N$. Therefore, $x_n^r \in K_- \subset K(rn)$ for all $n \le -N$. It follows by induction that $x_n^r \in K(rn)$ for all $n \in \N$. In particular, $x_n^r \in K_+$, which is a compact subset of $\B(X_+,\lambda_+)$. Therefore, by \cref{compactK}, $x_n^r \to X_+$ as $n \to \infty$.
\end{proof}

\section{Flows and Maps}\label{flowsMaps}

In \cref{conditions} we gave some conditions for R-tipping in maps, some of which agree with related conditions for R-tipping in flows, some of which do not. In this section we will highlight the differences by exploring what happens when a map is obtained from a flow and there are two pullback attractors (one for the map and one for the flow).

Consider a continuous nonautonomous dynamical system of the form
\begin{align}\label{flow}
\frac{dx}{dt} = f(x,\Lambda(rt))
\end{align}
where $x \in \R^\ell$, $\Lambda: \R \to \R^m$ is $C^1$ and satisfies \cref{paramShift}, and $f: \R^\ell \times \R^m \to \R^\ell$ is $C^1$. The corresponding autonomous system is
\begin{align}\label{autoFlow}
\frac{dx}{dt} = f(x,\lambda)
\end{align}
for a fixed $\lambda \in \R^m$. Let $\Phi_\lambda$ denote the flow under \cref{autoFlow} for a given value of $\lambda$, so $\Phi_\lambda(t,x_0) = x(t)$ is the solution to \cref{autoFlow} with initial condition $x(0) = x_0$. Then we can define a map by evaluating the flow at integer time values, but allowing the parameter value to change with each time step:
\begin{align}\label{mapFromFlow}
x_{n+1} = \Phi_{\Lambda(rn)}(1,x_n).
\end{align}
Equation \cref{mapFromFlow} is of the same form as equation \cref{mapWithRate} because it is a nonautonomous map that depends on a time-varying parameter which changes according to \cref{paramShift}.

Suppose \cref{flow} has a stable path $(s,X(s))$ (all eigenvalues of $D_xf(X(s),\Lambda(s))$ have negative real part; see \cite{sebas}). Then $(s,X(s))$ is also a stable path for \cref{mapFromFlow}. For any $r > 0$, there is a unique pullback attractor $x^r(t)$ to $X_-$ under \cref{flow} (Theorem 2.2 of \cite{sebas}), and there is a unique pullback attractor $\{x_n^r\}$ to $X_-$ under \cref{mapFromFlow} (from \cref{uniqueSolution}).

In general it is {\em not} the case that $x_n^r = x^r(n)$ for all $n \in \Z$. To go from $x_n^r$ to $x_{n+1}^r$ in the map, we fix $\lambda = \Lambda(rn)$ and apply \cref{autoFlow} to $x_n^r$ for one time unit; over that whole time interval, $\lambda$ is fixed. On the other hand, to go from $x^r(n)$ to $x^r(n+1)$ in the flow, we apply \cref{flow} for one time unit, where $\Lambda(rn)$ can change over time. Since the parameter change affects the flow and the map differently, the pullback attractors can have different behavior. The following example illustrates this.

\begin{ex}\end{ex}
Let
$$\frac{dx}{dt} = -(x-\Lambda(rt))(x-\Lambda(rt) - 1)(x-\Lambda(rt)-2)$$
$$\Lambda(s) = \sech(s) + 0.4 \tanh(s)$$
and define the map
$$x_{n+1} = \Phi_{\Lambda(rn)}(1,x_n)$$
where $\Phi_\lambda$ is the flow generated by the continuous autonomous system. Then there are stable paths at $(s,X(s)) = (s,\Lambda(s)+2)$ and $(s,Z(s)) = (s,\Lambda(s))$ and an unstable path at $(s,Y(s)) = (s,\Lambda(s) + 1)$. \cref{fig:2Pullbacks} shows a plot of the two pullback attractors $x^r(t)$ and $\{x_n^r\}$ to $X_- = 1.6$ when $r = 3$. They have different behavior as time goes to infinity; $x^r(t)$ endpoint tracks the path $(s,X(s))$ while $\{x_n^r\}$ does not. (In fact, this is true for all sufficiently large $r$.)

\begin{figure}
\centering
\includegraphics[scale = 0.55]{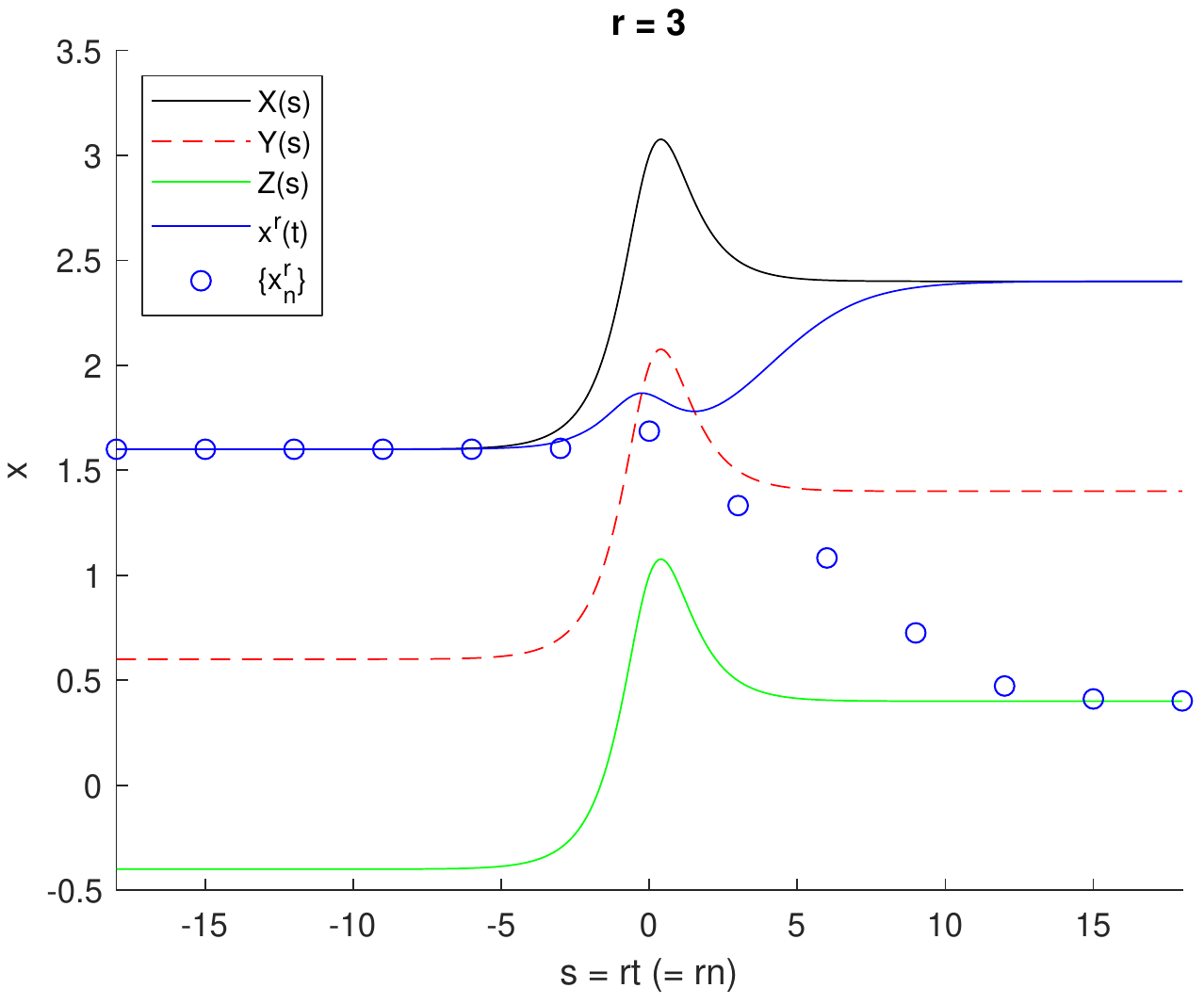}
\caption{The pullback attractor to $X_-$ in the continuous system, $x^r(t)$, endpoint tracks $(s,X(s))$, while the pullback attractor in the discrete system, $\{x_n^r\}$, tips to $Z_+$.}
\label{fig:2Pullbacks}
\end{figure}

\section{A 2-Dimensional Example: The Ikeda Map}\label{section:ikeda}

Up until now, all of our examples of rate-induced tipping in maps have involved 1-dimensional maps, even though our results have not specified anything about dimension. In part this is because it is easier to understand the complete dynamics in a 1-dimensional map; also it is easier to plot. In this section we will explore possibilities for rate-induced tipping in the Ikeda map, which is a 2-dimensional map that is used to model light in a ring cavity containing a dispersive nonlinear medium. The version of the map we will use here is the same as that used in \cite{ikedaMap}:
\begin{align}\label{complexMap}
z_n = a + R \exp\left[ i\left( \phi - \frac{p}{1 + |z_{n-1}|^2} \right) \right]z_{n-1},
\end{align}
where $z_j \in \C$ and $a, \phi, p , R \in \R$ are parameters. If we let $x_j = \Re(z_j)$ and $y_j = \Im(z_j)$, then we can rewrite \eqref{complexMap} as
\begin{equation}\label{realMap}
\begin{split}
x_n & = a + Rx_{n-1} \cos \theta - R y_{n-1} \sin \theta \\
y_n & = R x_{n-1} \sin \theta + R y_{n-1} \cos \theta.
\end{split}
\end{equation}
where $\theta = \phi - \frac{p}{1 + x_{n-1}^2 + y_{n-1}^2}$. As shown in \cite{ikedaMap}, there are generically an odd number of fixed points of \cref{realMap}. When there are three, let $X$, $Y$, and $Z$ denote the fixed point with the largest, middle, and smallest norm, respectively. Then $X$ is always stable, $Y$ is always a saddle point, and $Z$ is sometimes stable, sometimes a saddle. When $Z$ is stable, the 1-dimensional stable manifold for $Y$ splits the plane into basins of attraction for $X$ and $Z$. See \cref{fig:ikedaAuto} for an illustration.

\begin{figure}
\centering
\includegraphics[scale = 0.55]{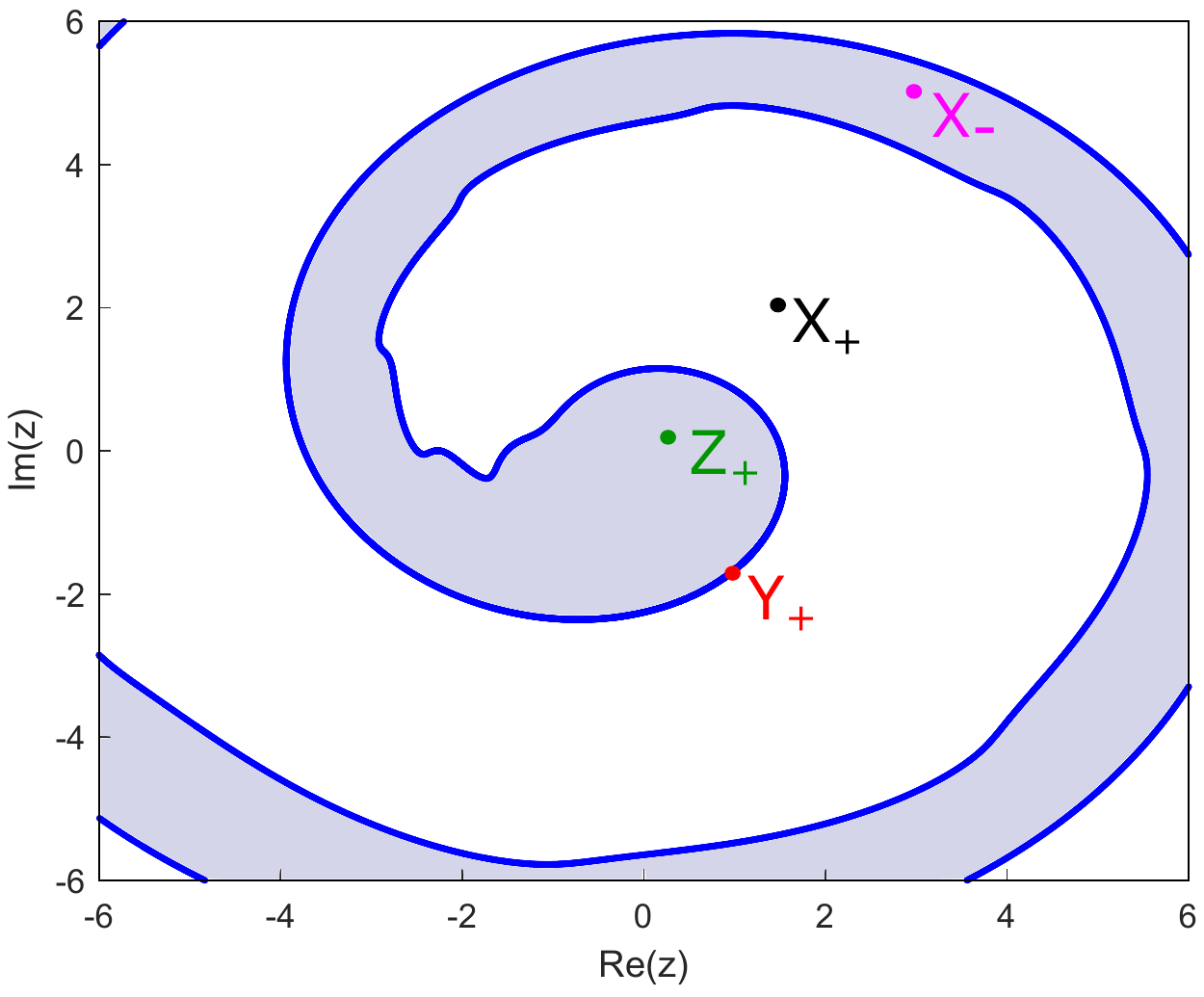}
\caption{Here $R = 0.9$, $\phi = 1$, $p = 6$, and $a = 0.5$. $Y_+$ is a saddle node, whose 1-dimensional stable manifold (plotted in blue) forms the boundary for the basins of attraction for stable fixed points $X_+$ (white) and $Z_+$ (gray). The point $X_- \approx (2.9698, 5.0274)$ is the largest-norm fixed point when $a = 4.5$; notice it is in the basin of attraction of $Z_+$.}
\label{fig:ikedaAuto}
\end{figure}

Of the four parameters in the map, $a$ is the one that is the most realistic to change, as it represents the amplitude of the inputted light. The other three parameters give information about the ring cavity itself and make sense to leave fixed. So, to get an example of R-tipping, we will fix $R = 0.9$, $\phi =1$, and $p = 6$, letting $a=a(s)$ be the time-varying parameter. From \cref{canRTip}, we know we can get R-tipping away from some $X_-$ if $X(u) \in \B(Z(v),a(v))$ for some $u < v$, where $(s,X(s))$ and $(s,Z(s))$ are distinct stable paths. In particular, this will be the case if $X_- \in \B(Z_+,a_+)$. If $a_+ = 0.5$ and $Z_+$ corresponds to the stable fixed point of \cref{realMap} with smallest norm, then we already know what $\B(Z_+,a_+)$ looks like from \cref{fig:ikedaAuto}. So, we just need to find a value for $a_-$ such that the fixed point with largest norm, $X_-$, is in $\B(Z_+,a_+)$. One such value is $a_- = 4.5$ ($X_- \approx (2.9698, 5.0274)$; see \cref{fig:ikedaAuto}). Thus, we will decrease $a(s)$ from $a_- = 4.5$ to $a_+ = 0.5$ according to the function
$$a(s) = \frac{5}{2} - 2 \tanh(s).$$
The bifurcation diagram for this parameter change is shown in \cref{fig:tipIkeda}. There is one stable path of fixed points for all time, $(s,X(s))$, which corresponds to the fixed point with largest norm. When $a < 1.6$, there are two other paths, $(s,Y(s))$ and $(s,Z(s))$, which are the unstable middle-norm and stable smallest-norm fixed points, respectively. Since we have carefully set up our parameter shift in a way that satisfies the conditions of \cref{canRTip}, we hope to see R-tipping away from $X_-$ to $Z_+$ for some rate $r>0$, which indeed we do when $r = 2$; see \cref{fig:tipIkeda}.

\begin{figure}
	\centering
	\begin{subfigure}{0.4 \textwidth}
		\centering
		\includegraphics[width = \textwidth]{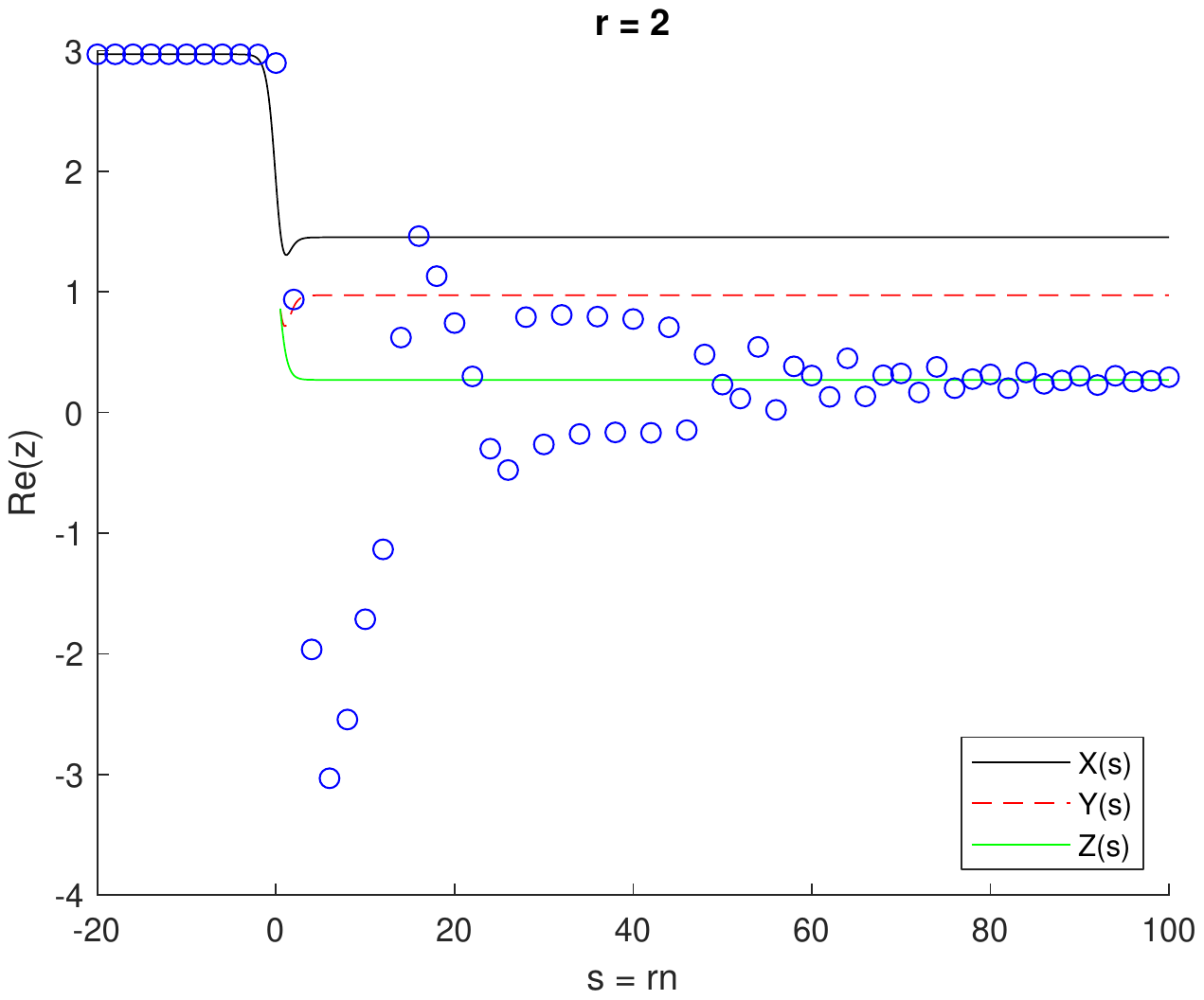}
	\end{subfigure}
	\hspace{0.5in}
	\begin{subfigure}{0.4 \textwidth}
		\centering
		\includegraphics[width = \textwidth]{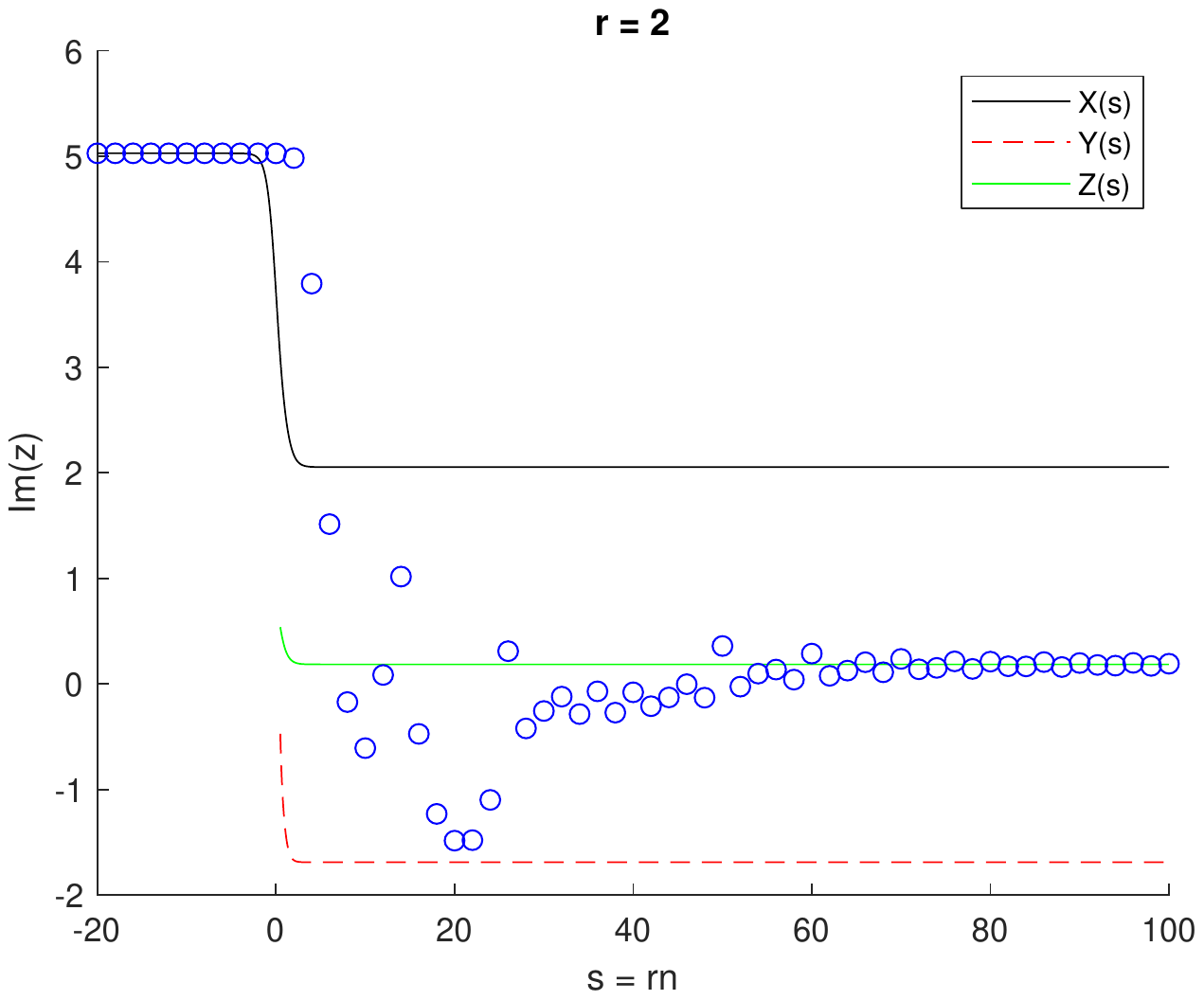}
	\end{subfigure}
	\caption{When $r = 2$, the pullback attractor to $X_-$ tips to $Z_+$.}
	\label{fig:tipIkeda}
\end{figure}

\section*{Acknowledgments}
The author is greatly indebted to Chris Jones for his guidance on this project, in pointing out the Ikeda map and for his suggestions on some proof techniques.


\end{document}